\documentclass[11pt,a4paper]{article}

\voffset=-1.5cm \hoffset=-1.4cm \textwidth=16cm \textheight=22.0cm
\usepackage{comment} 
\usepackage{graphicx}
\usepackage{amsmath}
\usepackage{amsfonts}
\usepackage{amssymb}
\usepackage{enumerate}
\usepackage{lscape}
\usepackage{longtable}
\usepackage{rotating}
\usepackage{multirow}
\usepackage{color}
\usepackage{url}
\usepackage{subfigure}
\usepackage{rotating}
\usepackage{comment}
\usepackage{chngcntr}
 \usepackage{calc}
 \usepackage[framemethod=tikz]{mdframed}

\newtheorem{theorem}{Theorem}[section]

\usepackage[ruled,vlined,noline,linesnumbered]{algorithm2e}

\usepackage{titlesec}
\usepackage[titletoc]{appendix}
\usepackage{varwidth}

\newtheorem{corollary}{Corollary}[section]

\newtheorem{lemma}{Lemma}[section]

\newtheorem{proposition}{Proposition}[section]
\newtheorem{remark}{Remark}[section]

\newtheorem{assumption}{Assumption}[section]

\newenvironment{proof}[1][Proof]{\textbf{#1.} }{\ \rule{0.5em}{0.5em} \vspace{1ex}}

\newenvironment{assumption*}
{\ifnum\value{subassumption}=0 \stepcounter{assumption}\fi\subassumption}
{\endsubassumption}
\newenvironment{assumption+}[1]
{\subassumption}
{\endsubassumption}

\usepackage{algorithmic}

\setlength{\unitlength}{1mm}

\def\real{\mathbb{R}}
\def\R{\mathbb{R}}

\def\flow{f_{\mbox{\rm\scriptsize low}}}

\def\ny{n_y}
\def\nx{n_x}

\DeclareMathOperator{\argmin}{argmin}

\newcommand{\n}[1]{\|#1\|}
\newcommand{\Alpha}{\alpha_{\min}}
\newcommand{\barAlpha}{\bar{\alpha}_{\min}}

\DeclareMathOperator{\conv}{conv}

\DeclareMathOperator{\cm}{cm}

\title{Inexact Direct-Search Methods for Bilevel Optimization Problems}
\author{}

\newcommand{\ftnote}[1]{\textcolor{blue}{[ $\diamondsuit$ #1]}}
\newcommand{\commentdf}[1]{\textcolor{blue}{#1}}

\begin{document}

\author{	
	Youssef Diouane\thanks{Department of Mathematics and Industrial Engineering, Polytechnique Montréal, Montréal, QC, Canada.(\tt{youssef.diouane@polymtl.ca})}
	\and
	Vyacheslav Kungurtsev\thanks{Department of Computer Science,
		Czech Technical University, Czech Republic. 
  (\tt{kunguvya@fel.cvut.cz})
	}
	\and
	Francesco~Rinaldi\thanks{Dipartimento di Matematica ``Tullio Levi-Civita'', Universit\`a
		di Padova, Italy.
		(\tt{rinaldi@math.unipd.it})}
	\and
	Damiano Zeffiro\thanks{Dipartimento di Matematica ``Tullio Levi-Civita'', Universit\`a di Padova, Italy.
		({\tt zeffiro@math.unipd.it})}   
}

\maketitle

\maketitle

\begin{abstract}
	In this work, we introduce new direct-search schemes for the solution of bilevel optimization (BO) problems. Our methods rely on a fixed accuracy blackbox oracle for the lower-level problem, and deal both with smooth and potentially nonsmooth true objectives. We thus analyze for the first time in the literature direct-search schemes in these settings, giving convergence guarantees to approximate stationary points, as well as complexity bounds in the smooth case. We also propose the first adaptation of mesh adaptive direct-search schemes for BO. Some preliminary numerical results on a standard set of bilevel optimization problems show the effectiveness of our new approaches.  
\end{abstract}

\section{Introduction}

Bilevel optimization (see, e.g., \cite{beck2021gentle,colson2007overview,dempe2002foundations,dempe2020bilevel,kleinert2021survey} and references therein for a complete overview on the topic) has been subject of increasing interest, thanks to its  application to hyperparameter tuning for machine learning algorithms and meta-learning (see, e.g.,~\cite{franceschi2018bilevel} and references therein).
In this work, we are interested in the following bilevel optimization problem
\begin{equation}\label{eq:pb}
	\min_{(x,y) \in \R^{\nx \times \ny}}~~~~ f(x,y),~~~~~~ 
	\mbox{s.t.}~~~~~~ y \in \displaystyle \arg \min_{z \in Z}  g(x,z). 
\end{equation}
wherein we assume that the upper-level function $f(x,y):\R^{\nx \times \ny}\to \R$ is continuous, and $g(x,z):\R^{\nx \times \ny}\to \R$ is such that the lower-level problem $\min_{z \in Z}  g(x,z)$ has a unique solution $y(x)$ for every $x \in \R^{\nx}$, and $Z\subset \R^{\ny}$. Uniqueness of the lower-level problem solution, also known  as the Low-Level Singleton (LLS) 
assumption, is a quite common assumption in many real world applications, such as hyperparameter optimization, meta-learning, pruning, semi-supervised learning on multilayer graphs (see, e.g.,~\cite{franceschi2018bilevel, ji2021bilevel,venturini2023learning, zhang2022advancing}). While for simplicity we focus on the setting described above,
it is important to point out that our analysis still holds, for a specific class of BO problems, even when dropping the LLS assumption (see Remark~\ref{rem_dropLLS}).
 
The algorithms we study here are derivative free optimization (DFO) methods, which do not use derivatives of the upper-level objective function, but rather only the objective value itself. Importantly, in this setting we also assume the availability of some blackbox oracle generating an approximation $\tilde{y}(x)$ of $y(x)$ for any given $x \in \R^\nx$. Among DFO methods, we are interested in particular in direct-search methods (see, e.g.,~\cite{audet2014survey, larson2019derivative}), which sample the objective in suitably chosen tentative points without building a model for it. These algorithmic schemes allow us to prove convergence guarantees under very mild assumptions on our bilevel optimization problem.

\subsection{Previous Work}

Several gradient-based methods have been proposed in the literature to tackle bilevel optimization problems. Those methods usually require the computation of the true objective gradient, called  ``hypergradient'', and rely on the LLS and suitable smoothness assumptions (see, e.g.,~\cite{franceschi2018bilevel,grazzi2020iteration,khanduri2021near,liang2023lower,liu2022bome} and references therein). In another line of research, some asymptotic results based on relaxations of the LLS assumption were also analyzed (see, e.g.,~\cite{liu2021value,liu2021towards,liu2020generic} and references therein). Calculating the hypergradients can be however a notoriously challenging and time consuming task. It indeed requires the handling of $\nabla_x y(x)$,   which in turns involves the calculation of the Hessian matrix related to the $g$ function via the implicit differentiation theorem. In some contexts, 
the hypergradients might not be available at all due to the blackbox nature of the functions describing the problem. These are the reasons why the development of new and efficient zeroth-order/derivative-free approaches is crucial in the BO context.

As for derivative free approaches, classic direct-search (see, e.g.,~\cite{audet2014survey,conn2009introduction,larson2019derivative}) and trust-region methods (see, e.g.,\cite{conn2009introduction,larson2019derivative})  have been applied to BO in~\cite{conn2012bilevel,ehrhardt2021inexact,mersha2011direct, zhang2014bilevel}. In~\cite{mersha2011direct}, a direct-search method for BO assuming the availability of the true objective is described. More specifically, their analysis does not allow for approximation errors in the solution of the lower-level problem, and relies on suitable assumptions making the true objective directionally differentiable. In~\cite{zhang2014bilevel}, the analysis from~\cite{mersha2011direct} is extended considering lower-level inexact solutions with  a stepsize-based adaptive error. In~\cite{conn2012bilevel}, an algorithm applying trust-region methods both in the inner level and on the true objective is described, with an adaptive estimation error for the true objective depending on the trust-region radius; in that work, a strategy to recycle function evaluations for the lower-level problem is described as well. In~\cite{ehrhardt2021inexact}, the analysis of another trust-region method with adaptive error for bilevel optimization is carried out.  The authors report  worst-case complexity estimates  both in terms of upper-level iterations and computational work from the lower-level problem, when considering a strongly convex lower-level problem solved by a suitable gradient descent approach.
In the more recent works~\cite{chen2023bilevel,maheshwari2023convergent}, zeroth-order methods based on smoothing strategies~\cite{nesterov2017random} are analyzed. These studies, drawing inspiration from the complexity results provided in~\cite{jordan2023deterministic} for zeroth-order methods that handle nonsmooth and non-convex objectives, offer complexity estimates tailored for the BO setting. They rely on the assumptions that the lower-level problem can be solved with fixed precision, and that gradient descent on the lower level converges either polinomially or exponentially, respectively. 

Finally, min-max DFO problems (which can be seen as a particular instance of BO) are also recently tackled in the literature~\cite{anagnostidis2021,menickelly2020outer}. Relevant to our work are also direct-search methods under the presence of noise. While previous works analyze direct-search methods with adaptive deterministic~\cite{lucidi2002derivative} and stochastic noise \cite{anagnostidis2021,audet2019stomads,rinaldi2022weak}, we are not aware of previous analyses of direct-search methods with bounded but non adaptive noise. 

\subsection{Contributions}
Our contributions can be summarized as follows.
\begin{itemize}
	\item We define and analyze the first inexact direct-search schemes for BO problems with general potentially nonsmooth true objectives. Those methods indeed never require exact
lower-level problem solutions, but instead assume access
to approximate solutions with fixed accuracy, a reasonable assumption in practice. We therefore operate in a different setting than the one considered in previous works on direct-search for BO, where true objectives are directionally differentiable~\cite{mersha2011direct,zhang2014bilevel} and lower-level solutions are exact~\cite{mersha2011direct}  or require an adaptive precision~\cite{zhang2014bilevel}.
	\item We analyze mesh based direct-search schemes for BO, extending in particular the classic mesh adaptive direct-search (MADS) scheme from~\cite{audet2006mesh}. This is, to the best of our knowledge, the first analysis of this scheme that considers both inexact objective evaluation and the simple decrease condition for new iterates used originally in~\cite{audet2006mesh}.
	\item We give the first convergence results for direct-search schemes with bounded and non-adaptive noise on the objective.
	\item We give the first convergence guarantees to $(\delta, \epsilon)$-Goldstein stationary points for direct-search schemes applied to general nonsmooth objectives. With respect to classic analyses considering Clarke stationary points (see, e.g.,~\cite{audet2017derivative}), these are the first results for direct-search scheme involving some quantitative measure of approximate nonsmooth stationarity. 
\end{itemize}

\section{Background and Preliminaries}
We now introduce the main assumptions considered in the paper, along with a set of helpful preliminary results that will support the subsequent convergence theory. As anticipated in the introduction, we will always assume the existence of a unique minimizer $y(x)$ for the lower-level problem, i.e., that the LLS assumption holds.
\begin{assumption}
	\label{as:pbLower:unique}
	For any $x \in \R^{\nx}$, we have that $\argmin_{z \in Z} g(x, z) = \{y(x)\}$.
\end{assumption}

Under Assumption~\ref{as:pbLower:unique}, the bilevel optimization problem~\eqref{eq:pb} can then be rewritten as 
\begin{equation}\label{eq:pb:reduced}
	\min_{x \in \R^{\nx}}~~~~ F(x):=f\left(x,y(x)\right). 
\end{equation}

However, in practical applications, it is usually necessary to employ an iterative method to compute $y(x)$. Therefore, one cannot expect to obtain an exact value of $y(x)$, but rather some approximation. We  will hance make use of the following assumption.
\begin{assumption}\label{as:prec}
	For all $x \in \R^{\nx}$ we can compute an approximation $\tilde{y}(x)$ of $y(x)$ such that:
	\begin{equation}\label{eq:prec}
		\n{\tilde{y}(x) - y(x)} \leq \varepsilon \, .
	\end{equation}
\end{assumption}
While the remaining assumptions introduced in this section are not always needed, in the rest of this manuscript we always assume that Assumptions~\ref{as:pbLower:unique} and~\ref{as:prec} hold. 
\begin{remark}\label{rem_dropLLS}
	Our analysis extends to the case where  $\argmin_{z \in Z} g(x, z)$ is not a singleton, but  an approximate solution $\tilde{y}(x)$ of the simple bilevel problem
	\begin{equation}\label{eq:pbs}
		\min_{y\in \R^{\ny}}  ~~~~ f(x,y),~~~~~~ 
		\mbox{s.t.}~~~~~~ y \in \displaystyle \arg \min_{z \in Z}  g(x,z). 
	\end{equation}
	is available for every $x \in \R^{\nx}$. In fact our convergence proofs rely on~\eqref{eq:prec} rather than the singleton assumption. We refer the reader to the recent work~\cite{chen2023bilevel} for a detailed discussion on the complexity and regularity properties of the simple bilevel problem~\eqref{eq:pbs}. 
\end{remark}

In the next proposition, we show how condition~\eqref{eq:prec} can be satisfied, by applying gradient descent to $g(x, \cdot)$, under a suitable error bound condition on $\nabla_{y} g(x, y)$ generalizing strong convexity (see, e.g,~\cite{karimi2016linear} for a detailed comparison with other conditions). We also give an explicit bound on the number of iterations needed to satisfy~\eqref{eq:prec}.

\begin{proposition}
	Assume that there exists $c_g>0$ such that for all $y\in Z$ ,
	\begin{equation}\label{eq:gbound}
		c_g\|y - y(x)\| \le \|\nabla_y g(x, y)\|\, .
	\end{equation}
	
	Furthermore, let $\nabla_y g$ be $L_g$ Lipschitz continuous in $y$, uniformly in $x$. Define $y_{0}(x)$ to be any arbitrary initialization mapping onto the domain of $g(x,\cdot)$. Then consider the sequence,
	\begin{equation}
		y_{k + 1}(x) = y_k(x) - \frac{1}{L_g} \nabla_y g(x, y_k(x)) \, .
	\end{equation}
	Define the solution estimate to be:
	\begin{equation}
		\tilde{y}(x) = \argmin_{k \in [0 : K(x)]} \n{\nabla_y g(x, y_k(x))}
	\end{equation}
	It holds that $\tilde{y}(x)$ satisfies~\eqref{eq:prec}, for
	\begin{equation}\label{eq:kxdef}
		K(x) = \left\lceil \frac{2L_g(g(x, y_0(x)) - g(x, y(x)))}{\varepsilon^2 c_g^2} \right \rceil \, .
	\end{equation}
\end{proposition}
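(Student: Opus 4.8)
The plan is to combine the standard descent lemma for $L_g$-smooth functions with the error bound condition \eqref{eq:gbound} to control how fast $\n{\nabla_y g(x, y_k(x))}$ decreases, and then turn a bound on the gradient norm into a bound on the distance to $y(x)$ via \eqref{eq:gbound} again. First I would fix an arbitrary $x$ and, for notational ease, write $g_k = g(x, y_k(x))$ and $y_k = y_k(x)$. Applying the descent lemma to the gradient step $y_{k+1} = y_k - \tfrac{1}{L_g}\nabla_y g(x, y_k)$ gives the classical inequality $g_{k+1} \le g_k - \tfrac{1}{2L_g}\n{\nabla_y g(x, y_k)}^2$, so the sequence $\{g_k\}$ is nonincreasing and, telescoping from $0$ to $K$,
\begin{equation}
	\frac{1}{2L_g}\sum_{k=0}^{K} \n{\nabla_y g(x, y_k)}^2 \le g_0 - g_{K+1} \le g_0 - g(x, y(x)),
\end{equation}
where the last step uses that $y(x)$ is the global minimizer of $g(x, \cdot)$ so $g_{K+1} \ge g(x, y(x))$. (One should note the gradient step stays in the domain; under the stated smoothness this is the standard setup, and $Z$ being the feasible set is handled implicitly — I would simply assume, as the proposition implicitly does, that the iterates remain where the error bound applies.)

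Next I would extract the smallest gradient norm. Since $\min_{k \in [0:K]} \n{\nabla_y g(x, y_k)}^2 \le \tfrac{1}{K+1}\sum_{k=0}^{K}\n{\nabla_y g(x, y_k)}^2$, combining with the telescoped bound yields
\begin{equation}
	\min_{k \in [0:K]} \n{\nabla_y g(x, y_k)}^2 \le \frac{2L_g\bigl(g(x, y_0(x)) - g(x, y(x))\bigr)}{K+1}.
\end{equation}
Now let $\tilde y(x) = y_{k^*}(x)$ be the iterate achieving this minimum, as in the definition of the solution estimate. By the error bound \eqref{eq:gbound} applied at $y = \tilde y(x)$, we have $c_g^2\n{\tilde y(x) - y(x)}^2 \le \n{\nabla_y g(x, \tilde y(x))}^2 = \min_{k\in[0:K]}\n{\nabla_y g(x, y_k)}^2$. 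Chaining the two inequalities gives
\begin{equation}
	\n{\tilde y(x) - y(x)}^2 \le \frac{2L_g\bigl(g(x, y_0(x)) - g(x, y(x))\bigr)}{c_g^2 (K+1)}.
\end{equation}
Finally I would impose that the right-hand side be at most $\varepsilon^2$; solving for $K$, it suffices that $K + 1 \ge \tfrac{2L_g(g(x, y_0(x)) - g(x, y(x)))}{\varepsilon^2 c_g^2}$, which is exactly guaranteed by the choice $K(x) = \lceil \tfrac{2L_g(g(x, y_0(x)) - g(x, y(x)))}{\varepsilon^2 c_g^2}\rceil$ in \eqref{eq:kxdef}. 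Hence $\tilde y(x)$ satisfies \eqref{eq:prec}.

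There is no single hard step here — the argument is a routine assembly of the descent lemma, telescoping, the min-versus-average trick, and one application of the error bound — so the main thing to be careful about is bookkeeping: making sure the error bound \eqref{eq:gbound} is invoked at the correct point ($\tilde y(x)$, not an arbitrary iterate) and that the ceiling in \eqref{eq:kxdef} correctly absorbs the ``$K+1$ versus $K$'' off-by-one so that the stated $K(x)$ genuinely suffices. A secondary subtlety worth a remark is the tacit assumption that the gradient iterates remain in the region $Z$ (or wherever \eqref{eq:gbound} and the Lipschitz bound hold); if $Z$ is a proper subset of $\R^{\ny}$ one would need a projection or a constraint qualification, but in the intended reading $g(x,\cdot)$ is defined and smooth with the error bound holding on the relevant domain, so unconstrained gradient descent is the right object.
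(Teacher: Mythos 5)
Your proof is correct and is exactly the argument the paper has in mind: the paper's own proof simply cites ``the well known iteration complexity of gradient descent for smooth non convex objectives,'' which is the descent-lemma/telescoping/min-versus-average chain you spell out, combined with one application of the error bound \eqref{eq:gbound} at $\tilde y(x)$. The bookkeeping (the $K+1$ versus $K$ issue and the ceiling) and the caveat about iterates remaining in $Z$ are both handled appropriately.
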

\begin{proof}
	This follows from the well known iteration complexity of gradient descent for smooth non convex objectives. 
\end{proof}

We introduce now some technical assumptions on the objective function needed in our analysis. 

\begin{assumption}\label{as:flow}
	The function $f$ is lower bounded by $\flow$.
\end{assumption}

\begin{assumption} \label{as:fLip}
	The function $f$ is Lipschitz continuous with respect to $y$ with Lipschitz constant 
	$L_f$ (independent of $x$).
\end{assumption}
We remark that these assumptions are an adaptation to our bilevel setting of standard assumptions made in the analysis of direct-search methods~\cite{conn2009introduction,lucidi2002derivative}. Assumption~\ref{as:prec} together with Assumption~\ref{as:fLip} imply that $\tilde{F}(x) := f(x, \tilde{y}(x))$ is an approximation of $F(x)$ with accuracy $L_f\varepsilon$. Indeed,
\begin{equation} \label{eq:epsaccuracy}
	| \tilde{F}(x) - F(x) | = |f(x, \tilde y(x)) - f(x, {y}(x))| \leq L_f \|\tilde y(x) - y(x)\| \leq L_f\varepsilon \, .
\end{equation}

Some regularity on the true objective $F(x)$ will always be necessary for our analyses. We consider both the differentiable and the potentially non differentiable setting. 

\begin{assumption} \label{as:Flip}
	$F(x)$ is Lipschitz continuous with constant $L_F$.
\end{assumption}

\begin{assumption} \label{as:nablaFlip}
	The function $F$ is continuously differentiable with 
	Lipschitz continuous gradient, of Lipschitz constant 
	$L$.
\end{assumption}
Note that if $f$ is Lipschitz with respect to $x$, and $y(x)$ Lipschitz continuous with respect to $x$, then Assumption~\ref{as:Flip} is satisfied. Furthermore, in the strongly convex lower-level setting there is an explicit expression for $\nabla F$ (see, e.g.,~\cite[ Equation (3)]{chen2023bilevel}), implying that its Lipschitz continuity follows from that of $y(x)$ together with suitable regularity assumptions on $f$ and $g$. 

\subsection{Algorithm}

In this section, we introduce a general direct-search algorithm for bilevel optimization that embeds both directional direct-search methods with \emph{sufficient decrease} and mesh adaptive direct-search methods with \emph{simple decrease}, as defined in~\cite{conn2009introduction}. 
The methods in the first class sample tentative points along a suitable set of descent directions and then select as the new iterate a point satisfying a sufficient decrease condition. 
The methods in the second class sample the points in a suitably defined mesh, and then select the new iterate according to a simple decrease condition. A tentative point $t$ is hence accepted if the decrease condition 

\begin{equation}\label{dec_cond}
f(t, \tilde{y}(t)) < f(x_k, y_k) - \rho(\alpha_k) 
\end{equation}
is satisfied, for $\rho$ nonnegative function. We have a sufficient decrease when $\rho(t) > 0$ with $\lim_{t \rightarrow 0^+} \rho(t)/t = 0$, and a simple decrease in case $\rho(t) = 0$. These two classes of decrease conditions lead to significant differences in convergence properties and consequently require different choices in the algorithm parameters. They will therefore be analyzed separately in Sections~\ref{s:sdec} and~\ref{s:simpdec} respectively. \\

	\begin{algorithm}
		\caption{DS for bilevel optimization}
		\label{alg:general}
		\begin{algorithmic}[1]  
			\STATE \textbf{Initialization:} Choose $x_0\in\mathbb{R}^{\nx}$, $\alpha_0$ initial stepsize, $\rho: \mathbb{R}_{> 0} \rightarrow \R_{\geq 0}$. Let $y_0 = \tilde{y}(x_0)$ be \\ an approximate minimizer for the lower-level problem in $x_0$. \textbf{Optional:} Let $\Delta_0 = \alpha_0$ \\ be the initial frame size parameter.
			\FOR{$k=0,1,2,\ldots$}
			\STATE  Let $M_k \subset \R^{n_x}$ be a mesh depending on $\alpha_k$ and $x_k$. Let $S_k$ be a finite \\ subset of $M_k$. 
			\IF{$f(t, \tilde{y}(t)) < f(x_k, y_k) - \rho(\alpha_k)$ for some $t \in S_k$}
			\STATE Set $x_{k + 1} = t$, declare the iteration successful, and go to step 13.
			\ENDIF
			\STATE Choose a set of descent directions $D_k$, possibly depending on $\Delta_k$ and such that $\{x_k + \alpha_k d \ | \ d \in D_k\} \subset M_k$. For a given $d \in D_k$, compute the approximate minimizer $y^{\alpha_k d}_{k} = \tilde{y}(x_k + \alpha_k d_k)$ for the lower problem. Evaluate $f$ at the poll points belonging \\ to $\{ (x_k+\alpha_k d, y^{\alpha_k d}_{k}): \, d \in D_k \}$.
			\IF{there exists $d_k \in D_k$ such that $f(x_k+\alpha_k d_k, y^{\alpha_k d_k}_{k}) < f(x_k, y_k) - \rho(\alpha_k)$}
			\STATE Declare the iteration as successful. Set $x_{k+1}=x_k+\alpha_k d_k$ and $y_{k+1}=y^{\alpha_k d_k}_{k}$.
			\ELSE
			\STATE Declare the iteration as unsuccessful. Set $x_{k+1}=x_k$ and $y_{k+1}=y_k$.
			\ENDIF
			\STATE Update the frame size parameter $\Delta_k$ and the stepsize $\alpha_k$.
			\STATE \textbf{Optional:} If some approximate stationarity condition is satisfied, terminate the algorithm.
			\ENDFOR
		\end{algorithmic}
	\end{algorithm}

The detailed scheme (see Algorithm~\ref{alg:general})  follows the lines of the general schemes proposed in~\cite{conn2009introduction} and~\cite{larson2019derivative}, with the addition of calls to the lower-level oracle $\tilde{y}(x)$, and an explicit reference to the mesh used in  mesh-based schemes. At Step 1, the algorithm searches for a new iterate by testing the upper level objective in $(t, \tilde{y}(t))$ for $t$ in $S_k$ subset of the mesh $M_k$. In case Step 1 is not successful, the method generates,  at Step~2, a new iterate by selecting a set of descent directions $D_k$ and testing the upper level objective in $(t, \tilde{y}(t))$ for $t$ chosen along the descent directions using a stepsize $\alpha_k$. Step 3 and Step 4 perform updates on the algorithm iterate and parameters based on the outcome of Step 1 and 2. 
For the set of directions $D_k$, we require in some cases a positive cosine measure, that is
\begin{equation} \label{eq:cmDk}
	\cm(D_k) \stackrel{d}{=} \min_{v \neq 0_{\real^{\nx}}} 
	\max_{d\in D_k} \frac{d^\top v}{\|d\|\|v\|} 
	\ge \kappa \, ,
\end{equation}
for some $\kappa > 0$. 

\section{Sufficient decrease condition} \label{s:sdec}
 In this section, we analyze directional direct-search methods using a sufficient decrease condition with $\rho(t) = \frac{c}{2}t^2$. We first focus on potentially nonsmooth objectives, and then on smooth ones. In both cases we consider the scheme presented in Algorithm~\ref{alg:2}, which can be viewed as an adaption to BO of classic generating set of search directions (GSS) schemes (see, e.g.,~\cite[Algorithm 3.2]{kolda2003optimization}). In order to handle the error introduced by the approximate solution in the lower level, we lower bound the stepsize with a constant $\Alpha$. We further notice that, thanks to the sufficient decrease condition, maintaining a mesh is not necessary, and therefore we simply set $M_k = \R^{\nx}$.

\begin{algorithm}
	\caption{Inexact directional DS for bilevel optimization}
	\label{alg:2}
	\begin{algorithmic}[1]
		\STATE \textbf{Initialization:} Choose starting point $x_0\in\mathbb{R}^{\nx}$, stepsize lower bound $\Alpha \geq 0$, initial stepsize $\alpha_0\geq \Alpha$, coefficient for stepsize contraction $0 < \theta < 1$, coefficient for stepsize expansion $\gamma \geq 1$, sufficient decrease condition coefficient $c$. Let $y_0 = \tilde{y}(x_0)$ be an approximate minimizer for the lower-level problem at $x_0$.    
		
		\FOR{$k=0,1,2,\ldots$}
		\STATE Let $S_k \subset \R^{n_x}$ with $|S_k| < +\infty$. 
		\IF{$f(t, \tilde{y}(t)) < f(x_k, y_k) - \frac{c}{2} \alpha_k^2$ for some $t \in S_k$}
		\STATE Set $x_{k + 1} = t$, declare the iteration successful, and go to step 13.
		\ENDIF
		\STATE Choose a set of descent directions $D_k$. For a given $d \in D_k$, compute the approximate minimizer $y^{\alpha_k d}_{k} = \tilde{y}(x_k + \alpha_k d_k)$ for the lower problem. Evaluate $f$ at the poll points belonging to $\{ (x_k+\alpha_k d, y^{\alpha_k d}_{k}): \, d \in D_k \}$.
		\IF{for some $d_k \in D_k$, $f(x_k+\alpha_k d_k, y^{\alpha_k d_k}_{k}) < f(x_k, y_k) - \frac{c}{2}\alpha_k^2$}
		\STATE Declare the iteration as successful. Set $x_{k+1}=x_k+\alpha_k d_k$ for $d_k$ satisfying the condition and $y_{k+1}=y^{\alpha_k d_k}_{k}$.
		\ELSE
		\STATE Declare the iteration as unsuccessful. Set $x_{k+1}=x_k$ and $y_{k+1}=y_k$.
		\ENDIF
		\STATE \textbf{If} the iteration was successful \textbf{then} maintain or increase
		the corresponding stepsize parameter -- set
		$\alpha_{k+1}$ $\in [\alpha_k, \gamma \alpha_k]$. \textbf{Else} decrease the stepsize parameter, by choosing $\alpha_{k+1}= \max \{ \Alpha,\theta \alpha_k \}$.
		\STATE \textbf{[Optional]} If some approximate stationarity condition is satisfied, terminate the algorithm.
		\ENDFOR
	\end{algorithmic}
\end{algorithm}

\subsection{Nonsmooth objectives}\label{s:nsosd}

First, we present convergence guarantees and proofs thereof for a variant of Algorithm~\ref{alg:2} designed for the case of Lipschitz continuous true objectives, i.e., under Assumption~\ref{as:Flip}. With respect to the general scheme presented as Algorithm~\ref{alg:2}, here $D_k = \{g_k\}$ with $g_k$ generated in the unit sphere. We remark that this is a standard choice for direct-search algorithms applied to nonsmooth objectives (see, e.g.,~\cite[Algorithm $\text{DFN}_{simple}$]{fasano2014linesearch}). The stepsize lower bound here must be strictly positive (i.e. $\Alpha > 0$).This together with the sufficient decrease conditions ensures that the sequence generated by the algorithm is eventually constant, as proved in Lemma~\ref{l:ec}. We then use a novel argument to prove that the limit point of the sequence is a $(\delta, \epsilon)$-Goldstein stationary point. Although such a notion of stationarity has recently gained attention in the analysis of zeroth-order smoothing-based approaches~\cite{jordan2023deterministic,lin2022gradient,rando2023optimal}, including extensions to BO~\cite{chen2023bilevel,maheshwari2023convergent}, to the best of our knowledge, it has never been used for the analysis of direct-search methods.It is further important to notice that convergence of directional direct-search methods to $(\delta, \epsilon)-$Goldstein stationary points in the nonsmooth case is a novel result also for classic optimization problems.
We now recall some useful definitions. If $B_{\delta}(x)$ is the ball of radius $\delta$ centered in $x$, then the $\delta$-Goldstein subdifferential (see, e.g.,~\cite{lin2022gradient}) is defined as
\begin{equation}
	\partial_{\delta} F(x)	= \conv \left\{  \bigcup_{y \in B_{\delta}(x)} \partial F(y) \right\} \, ,
\end{equation}
and $x$ is an $(\delta, \epsilon)$-Goldstein stationary point for the function $F$ if, for some $g \in \partial_{\delta}F(x)$, we have $\n{g}\leq \epsilon$. 

We can now proceed with our convergence analysis. As anticipated, we start by proving that the sequence of iterates generated by our method is eventually constant.
 \begin{lemma} \label{l:ec}
	Let Assumptions~\ref{as:flow} and~\ref{as:fLip} hold. Then there exists $\bar{k}\in \mathbb{N}_0$ such that the sequence $\{x_k\}$ generated by Algorithm~\ref{alg:2} is constant for $k \geq \bar{k}$.
\end{lemma}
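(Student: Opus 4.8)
The plan is to exploit the interplay between the strictly positive stepsize lower bound $\Alpha > 0$ and the sufficient decrease condition $\rho(\alpha_k) = \frac{c}{2}\alpha_k^2$. Observe that whenever the algorithm accepts a new iterate (either at the search step or at the poll step), the new function value $f(x_{k+1}, y_{k+1})$ drops by at least $\frac{c}{2}\alpha_k^2 \geq \frac{c}{2}\Alpha^2$, a quantity bounded away from zero. So the key is to bound the total possible decrease of the monitored quantity $f(x_k, y_k)$ from below, and conclude that only finitely many successful iterations can occur; once all iterations are unsuccessful, the iterate is constant.

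The main subtlety is that $f(x_k, y_k) = f(x_k, \tilde{y}(x_k))$ is the \emph{inexact} objective $\tilde{F}(x_k)$, not $F(x_k)$, so I cannot directly invoke Assumption~\ref{as:flow} on $\tilde{F}$. However, by~\eqref{eq:epsaccuracy} (which uses Assumptions~\ref{as:prec} and~\ref{as:fLip}), we have $\tilde{F}(x) \geq F(x) - L_f\varepsilon \geq \flow - L_f \varepsilon$ for all $x$, so $\tilde{F}$ is also bounded below, by $\flow - L_f\varepsilon$. Hence the sequence $\{f(x_k,y_k)\}$ is nonincreasing (it only changes at successful iterations, where it strictly decreases) and bounded below, so it converges; moreover, summing the per-success decreases, $\sum_{k \in \mathcal{S}} \frac{c}{2}\alpha_k^2 \leq f(x_0,y_0) - (\flow - L_f\varepsilon) < \infty$, where $\mathcal{S}$ is the set of successful iteration indices. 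Since each term in this sum is at least $\frac{c}{2}\Alpha^2 > 0$, the set $\mathcal{S}$ must be finite. Let $\bar{k}$ be larger than every index in $\mathcal{S}$; then for all $k \geq \bar{k}$ the iteration is unsuccessful, so by the update rule $x_{k+1} = x_k$, and the sequence $\{x_k\}$ is constant for $k \geq \bar{k}$.

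The only point requiring a little care is making sure that the bookkeeping of the monitored value is consistent: at a successful search step (step 4--5) the new iterate is $x_{k+1} = t$ with $y_{k+1} = \tilde{y}(t)$, so $f(x_{k+1}, y_{k+1}) = f(t, \tilde{y}(t)) < f(x_k, y_k) - \frac{c}{2}\alpha_k^2$, and similarly at a successful poll step $f(x_{k+1}, y_{k+1}) = f(x_k + \alpha_k d_k, y_k^{\alpha_k d_k}) < f(x_k, y_k) - \frac{c}{2}\alpha_k^2$; in both cases the decrease is genuine and at least $\frac{c}{2}\Alpha^2$. I expect this to be the entirety of the argument — there is no real obstacle here, since the strictly positive lower bound $\Alpha$ is precisely what trivializes the telescoping/summation argument that in the classical setting ($\Alpha = 0$) would instead only give $\alpha_k \to 0$.
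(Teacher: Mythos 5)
Your proposal is correct and follows essentially the same argument as the paper: the inexact objective $\tilde F(x_k)$ is nonincreasing, bounded below by $\flow - L_f\varepsilon$ via~\eqref{eq:epsaccuracy}, and drops by at least $\tfrac{c}{2}\Alpha^2$ at every successful iteration, so only finitely many successes can occur and the iterates are eventually constant. The paper's proof is exactly this counting argument (it even writes out the same bound $2(\tilde F(x_0)-\flow+L_f\varepsilon)/(c\Alpha^2)$ on the number of successful steps), so there is nothing to add.
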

\begin{proof}
	Notice that $\{\tilde{F}(x_k)\}$ is non-increasing, with $\tilde{F}(x_k) = \tilde{F}(x_{k + 1})$ after an unsuccessful step, and 
	\begin{equation} \label{eq:succ}
		\tilde{F}(x_{k + 1}) < \tilde{F}(x_k) - \frac{c}{2}\alpha_k^2 \leq \tilde{F}(x_k) - \frac{c}{2} \Alpha^2 
	\end{equation}
	after a successful step. Thus there can be at most
	\begin{equation} \label{eq:numsucc}
		\frac{2\left(\tilde{F}(x_0) - \inf_{x \in \R^n} \tilde{F}(x)\right)}{c \Alpha^2} \leq \frac{2\left(\tilde{F}(x_0) - f_{\textnormal{low}} + L_f\varepsilon\right)}{c \Alpha^2} \, 
	\end{equation}
	successful steps, where we used $\tilde{F}(x) \geq F(x) - L_f\varepsilon \geq f_{\textnormal{low}} - L_f \varepsilon$ in the inequality. Since this quantity is finite, this implies that $\{x_k\}$ is eventually constant. 
\end{proof}
  
We now prove convergence of our algorithm to $(\delta,\epsilon)$-Goldstein stationary points. In order to get our convergence result, 
	we need to assume that the sequence $\{g_k\}$ is dense in the unit sphere. We remark that such a dense sequence can be generated using a suitable quasirandom sequence (see, e.g.,~\cite{halton1960efficiency, liuzzi2019trust}).
\begin{theorem} \label{th:nonsmooth}
	Let Assumptions~\ref{as:flow},~\ref{as:fLip} and~\ref{as:Flip} hold. Assume that $\{g_k\}$ is dense in the unite sphere. Then the sequence $\{x_k\}$ generated by Algorithm~\ref{alg:2} is eventually constant, with the unique limit point $(\delta,\epsilon)$-Goldstein stationary, for
	\begin{equation}
		\epsilon = \frac{4L_f\varepsilon}{\Alpha} + c\Alpha ~~\mbox{and}  \quad \delta = \Alpha \, . 
	\end{equation}
\end{theorem}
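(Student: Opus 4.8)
The plan is to combine Lemma~\ref{l:ec}, which already gives eventual constancy of $\{x_k\}$ with some limit point $\bar{x}$, with an analysis of the poll directions at the (infinitely many) unsuccessful iterations that occur once $x_k \equiv \bar{x}$. After $\bar{k}$, every iteration is unsuccessful, so the stepsize obeys $\alpha_{k+1} = \max\{\Alpha, \theta\alpha_k\}$ and hence $\alpha_k \downarrow \Alpha$; in fact $\alpha_k = \Alpha$ for all $k$ large enough. For each such $k$ the failure of the poll step with $D_k = \{g_k\}$ means
\begin{equation}
f(\bar{x} + \Alpha g_k, \tilde{y}(\bar{x} + \Alpha g_k)) \ge f(\bar{x}, y_{\bar{k}}) - \tfrac{c}{2}\Alpha^2,
\end{equation}
i.e. $\tilde{F}(\bar{x} + \Alpha g_k) \ge \tilde{F}(\bar{x}) - \tfrac{c}{2}\Alpha^2$. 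Using~\eqref{eq:epsaccuracy} twice to pass from $\tilde{F}$ to $F$ on both sides, this yields $F(\bar{x} + \Alpha g_k) \ge F(\bar{x}) - \tfrac{c}{2}\Alpha^2 - 2L_f\varepsilon$ for every $k \ge$ some index. Since $\{g_k\}$ is dense in the unit sphere, and since $y \mapsto F(\bar{x}+\Alpha y)$ is continuous (Assumption~\ref{as:Flip}), this inequality extends from the $g_k$ to \emph{all} unit vectors $u$: for every $\|u\|=1$,
\begin{equation}\label{eq:dirbound}
F(\bar{x} + \Alpha u) - F(\bar{x}) \ge -\tfrac{c}{2}\Alpha^2 - 2L_f\varepsilon.
\end{equation}

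Next I would convert the directional lower bound~\eqref{eq:dirbound} into a statement about the Goldstein subdifferential at scale $\delta = \Alpha$. The key tool is the Lebourg mean value theorem for Lipschitz functions: for each unit $u$ there is $t \in (0,\Alpha)$ and $\xi_u \in \partial F(\bar{x} + t u) \subset \partial_{\Alpha} F(\bar{x})$ with $F(\bar{x}+\Alpha u) - F(\bar{x}) = \Alpha\, \xi_u^\top u$. Combined with~\eqref{eq:dirbound} this gives $\xi_u^\top u \ge -\tfrac{c}{2}\Alpha - \tfrac{2L_f\varepsilon}{\Alpha}$ for every unit $u$, where $\xi_u \in \partial_\Alpha F(\bar{x})$. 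In other words, the compact convex set $G := \partial_\Alpha F(\bar{x})$ has the property that for every direction $u$ there is a point of $G$ whose inner product with $u$ is at least $-\beta$, where $\beta := \tfrac{c}{2}\Alpha + \tfrac{2L_f\varepsilon}{\Alpha}$. Equivalently $\max_{\xi \in G}\, \xi^\top u \ge -\beta$ for all unit $u$, i.e. $-\min_{\xi\in G} \xi^\top(-u) \ge -\beta$, so $\min_{\xi \in G}\xi^\top w \le \beta$ for every unit $w$.

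Now I invoke the standard convex-geometry fact: if $G$ is a nonempty compact convex set and $g^*$ is its minimum-norm element, then $\min_{\xi\in G}\xi^\top w \ge \|g^*\|$ when $w = g^*/\|g^*\|$ (the support function of $G$ in the direction $-g^*/\|g^*\|$ equals $-\|g^*\|$ by the projection/obtuse-angle characterization, $\xi^\top g^* \ge \|g^*\|^2$ for all $\xi \in G$). Taking $w = g^*/\|g^*\|$ in the inequality $\min_{\xi\in G}\xi^\top w \le \beta$ forces $\|g^*\| \le \beta = \tfrac{c}{2}\Alpha + \tfrac{2L_f\varepsilon}{\Alpha}$. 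Hence the minimum-norm element $g^*$ of $\partial_\Alpha F(\bar{x})$ satisfies $\|g^*\| \le \tfrac{c}{2}\Alpha + \tfrac{2L_f\varepsilon}{\Alpha} \le c\Alpha + \tfrac{4L_f\varepsilon}{\Alpha} = \epsilon$, which is exactly the claimed $(\delta,\epsilon)$-Goldstein stationarity of $\bar{x}$ with $\delta = \Alpha$. (The constant $\tfrac{1}{2}$ vs $1$ and $2$ vs $4$ leaves a comfortable slack, which is convenient if one prefers to be slightly lossy when passing from $g_k$ to arbitrary $u$ via density and uniform continuity on a compact set.)

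The main obstacle I anticipate is the density-to-all-directions step together with the mean value theorem bookkeeping: one must be careful that the inequality $\tilde{F}(\bar{x}+\Alpha g_k)\ge \tilde{F}(\bar{x}) - \tfrac{c}{2}\Alpha^2$ holds for all sufficiently large $k$ (not just infinitely many), which needs $x_k \equiv \bar{x}$ \emph{and} $\alpha_k \equiv \Alpha$ eventually — both of which follow from Lemma~\ref{l:ec} and the stepsize update rule — and then that continuity of $F$ upgrades a bound holding on a dense set of unit vectors to one holding on the whole sphere. The convex-geometry lemma relating the support function to the minimum-norm element is routine but should be stated explicitly, since it is the crux of translating "every direction is almost-ascent" into "the Goldstein subdifferential nearly contains the origin."
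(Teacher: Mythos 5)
Your proposal is correct, and its first half (eventual constancy from Lemma~\ref{l:ec}, eventual unsuccessfulness with $\alpha_k=\Alpha$, passing from $\tilde F$ to $F$ via~\eqref{eq:epsaccuracy}, and upgrading the bound from the dense set $\{g_k\}$ to all unit directions by continuity of $F$) coincides with the paper's argument; note that the tail $\{g_k\}_{k\ge\bar k}$ remains dense since only finitely many directions are discarded and the sphere has no isolated points, so that step is sound. Where you genuinely diverge is in converting the directional inequality $F(\bar x+\Alpha u)-F(\bar x)\ge -\tfrac{c}{2}\Alpha^2-2L_f\varepsilon$ into Goldstein stationarity. The paper perturbs $F$ by the quadratic $\bigl(\tfrac{c}{2}+\tfrac{2L_f\varepsilon}{\Alpha^2}\bigr)\n{d}^2$, observes that the perturbed function attains its minimum over the closed ball $\n{d}\le\Alpha$ at an interior point $\tilde d$, and reads off the small subgradient $g=-\bigl(c+\tfrac{4L_f\varepsilon}{\Alpha^2}\bigr)\tilde d\in\partial F(\bar x+\tilde d)\subset\partial_\Alpha F(\bar x)$ from the first-order condition (using only the Clarke sum rule with a smooth term); this gives $\n{g}<c\Alpha+\tfrac{4L_f\varepsilon}{\Alpha}$. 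You instead apply Lebourg's mean value theorem to place a subgradient $\xi_u\in\partial_\Alpha F(\bar x)$ with $\xi_u^\top u\ge-\beta$, $\beta=\tfrac{c}{2}\Alpha+\tfrac{2L_f\varepsilon}{\Alpha}$, on every segment, and then use the projection characterization of the minimum-norm element of the compact convex set $\partial_\Alpha F(\bar x)$ to conclude $\n{g^*}\le\beta$. Both routes are valid (you correctly need compactness and convexity of $\partial_\Delta F(\bar x)$, which hold by upper semicontinuity of the Clarke subdifferential and Lipschitzness of $F$); yours buys a constant that is a factor of $2$ sharper than the theorem's stated $\epsilon$, at the cost of invoking Lebourg's theorem and an explicit convex-duality lemma, while the paper's penalization argument is more self-contained and generalizes verbatim to the mesh-based setting of Theorem~\ref{th:nssimd}, where it is reused.
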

\begin{proof}
	First, $\{x_k\}$ is eventually constant as seen in Lemma~\ref{l:ec}. Let $\bar{x}$ be the unique limit point. By the stepsize updating rule, we have that every iteration must be unsuccessful with $\alpha_k = \Alpha$ for $k$ large enough. Then, there exists $\bar{k} \in \mathbb{N}$ large enough such that for every $k \geq \bar{k}$

	\begin{equation}
		\tilde{F}(\bar{x}) < \tilde{F}(\bar{x} + \alpha_k g_k) + \frac{c}{2}\Alpha^2 = \tilde{F}(\bar{x} + \Alpha g_k) + \frac{c}{2}\Alpha^2 
	\end{equation}
	implying
	\begin{equation}
		F(\bar{x}) < F(\bar{x} + \Alpha g_k) + \frac{c}{2}\Alpha^2 + 2L_f\varepsilon\, .
	\end{equation}
	By the density of $\{g_k\}$ it follows
	\begin{equation}\label{eq:Fnonsmooth}
		F(\bar{x}) < F(\bar{x} + d) + \frac{c}{2}\Alpha^2 + 2L_f\varepsilon
	\end{equation}
	for every $d$ such that $\n{d} = \Alpha$. \\
	We now define the function $\bar{F}_{\bar{x}}(d) := F(\bar{x} + d) + (\frac{c}{2} + \frac{2L_f\varepsilon}{\Alpha^2}) \n{d}^2$. Since 
	\begin{equation}
		\bar{F}_{\bar{x}}(0) < \bar{F}_{\bar{x}}(d)
	\end{equation}
	for every $d$ such that $\n{d} = \Alpha$ by~\eqref{eq:Fnonsmooth}, there must be a $\tilde{d} \in \argmin_{\n{d} \leq \Alpha} \bar{F}_{\bar{x}}(d)$ with $\n{\tilde{d}} < \Alpha$. We can conclude
	\begin{equation}
		0 \in \partial \bar{F}_{\bar{x}}(\tilde{d}) = \partial F(x + \tilde{d}) - \left(c + \frac{4L_f\varepsilon}{\Alpha^2}\right)\tilde{d}
	\end{equation} 
	Equivalently, $g = (c + \frac{4L_f\varepsilon}{\Alpha^2})\tilde{d} \in  \partial F(x + \tilde{d})$ and since $\partial F(x + \tilde{d}) \subset \partial_{\Alpha} F(\bar{x})$ we have $g \in \partial_{\Alpha} F(\bar{x})$. To conclude, observe $\n{g} < c \Alpha + \frac{4L_f\varepsilon}{\Alpha} $. 
\end{proof}

As a corollary of Theorem~\ref{th:nonsmooth}, for $\Alpha \propto \sqrt{\varepsilon}$ we are able to get a $(\mathcal{O}(\sqrt{\varepsilon}), \mathcal{O}(\sqrt{\varepsilon}))-$Goldstein stationary point. Interestingly, the order of magnitude $\mathcal{O}(\sqrt{\varepsilon})$ of the approximation error coincides with that of typical gradient approximation methods~\cite{berahas2022theoretical}, as well as with that of direct-search in the smooth setting, as we shall see in the next section. 
\begin{corollary}
	Let Assumptions~\ref{as:flow},~\ref{as:fLip} and~\ref{as:Flip} hold. Assume that $\{g_k\}$ is dense in the unite sphere. Then the sequence $\{x_k\}$ generated by Algorithm~\ref{alg:2} with $\Alpha = 2\sqrt{\frac{L_f\varepsilon}{c}}$ is eventually constant, with the unique limit point $(\delta,\epsilon)$-Goldstein stationary, for
	\begin{equation}
		\epsilon = 4\sqrt{L_f\varepsilon c}~~\mbox{and}  \quad \delta = 2\sqrt{\frac{L_f\varepsilon}{c}} \, .
	\end{equation}
\end{corollary}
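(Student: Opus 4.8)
The plan is to derive the corollary directly from Theorem~\ref{th:nonsmooth} by substituting the prescribed value of $\Alpha$ into the expressions for $\epsilon$ and $\delta$ given there. Since Theorem~\ref{th:nonsmooth} already establishes, under Assumptions~\ref{as:flow},~\ref{as:fLip} and~\ref{as:Flip} together with density of $\{g_k\}$, that $\{x_k\}$ is eventually constant with unique limit point that is $(\delta,\epsilon)$-Goldstein stationary for $\epsilon = \frac{4L_f\varepsilon}{\Alpha} + c\Alpha$ and $\delta = \Alpha$, the only work is the algebraic specialization. The hypotheses of the corollary are exactly those of the theorem, and the choice $\Alpha = 2\sqrt{L_f\varepsilon/c}$ is a valid (strictly positive) stepsize lower bound, so the theorem applies verbatim.

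First I would note $\delta = \Alpha = 2\sqrt{L_f\varepsilon/c}$, which is immediate. Then I would compute the two terms of $\epsilon$ separately: $c\Alpha = 2c\sqrt{L_f\varepsilon/c} = 2\sqrt{L_f\varepsilon c}$, and $\frac{4L_f\varepsilon}{\Alpha} = \frac{4L_f\varepsilon}{2\sqrt{L_f\varepsilon/c}} = 2L_f\varepsilon\sqrt{c/(L_f\varepsilon)} = 2\sqrt{L_f\varepsilon c}$. Adding these gives $\epsilon = 4\sqrt{L_f\varepsilon c}$, as claimed. One could also remark in passing that $\Alpha = 2\sqrt{L_f\varepsilon/c}$ is the minimizer of $\Alpha \mapsto \frac{4L_f\varepsilon}{\Alpha} + c\Alpha$ over $\Alpha > 0$ (by AM-GM or a one-line derivative computation), which motivates the particular constant and explains why no better bound of this form is available from the theorem.

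There is essentially no obstacle here: the corollary is a pure substitution, and the "hard part" — establishing eventual constancy and Goldstein stationarity of the limit point — has already been done in Theorem~\ref{th:nonsmooth}. The only thing to be careful about is transcribing the two simplifications correctly and checking that the proposed $\Alpha$ meets the requirement $\Alpha > 0$ imposed in Section~\ref{s:nsosd}; both are trivial. The proof is therefore a two- or three-line invocation of the theorem followed by the arithmetic above.

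\begin{proof}
	Apply Theorem~\ref{th:nonsmooth} with $\Alpha = 2\sqrt{\frac{L_f\varepsilon}{c}} > 0$. All the hypotheses of that theorem hold by assumption, so the sequence $\{x_k\}$ is eventually constant with unique limit point $(\delta,\epsilon)$-Goldstein stationary for $\delta = \Alpha$ and $\epsilon = \frac{4L_f\varepsilon}{\Alpha} + c\Alpha$. Substituting the chosen value of $\Alpha$ yields $\delta = 2\sqrt{\frac{L_f\varepsilon}{c}}$ and
	\begin{equation}
		\epsilon = \frac{4L_f\varepsilon}{2\sqrt{L_f\varepsilon/c}} + 2c\sqrt{\frac{L_f\varepsilon}{c}} = 2\sqrt{L_f\varepsilon c} + 2\sqrt{L_f\varepsilon c} = 4\sqrt{L_f\varepsilon c} \, .
	\end{equation}
	We note in passing that $\Alpha = 2\sqrt{L_f\varepsilon/c}$ minimizes $\Alpha \mapsto \frac{4L_f\varepsilon}{\Alpha} + c\Alpha$ over $\Alpha > 0$, so this is the best bound obtainable from Theorem~\ref{th:nonsmooth} by this choice.
\end{proof}
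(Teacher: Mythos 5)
Your proof is correct and is exactly the argument the paper intends: the corollary is a direct specialization of Theorem~\ref{th:nonsmooth} with $\Alpha = 2\sqrt{L_f\varepsilon/c}$, and your arithmetic for both terms of $\epsilon$ checks out (the paper itself states the corollary without proof). The AM-GM remark explaining why this choice of $\Alpha$ is optimal is a nice, accurate addition.
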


\subsection{Smooth objectives}
We now focus on the case where the objective $F$ is smooth, in particular under Assumption~\ref{as:nablaFlip}. We consider here a variant of Algorithm~\ref{alg:2} with $D_k$ positive spanning set. When the stepsize lower bound is strictly positive we set as termination criterion $\alpha_k = \alpha_{k + 1} = \Alpha$. Our scheme can hence be seen as a variant of classic direct-search methods for smooth objectives~\cite{conn2009introduction,kolda2003optimization}.
It is important to highlight that this is the first analysis of direct-search methods for smooth objectives under bounded noise. The only analysis of direct-search methods we are aware of in the smooth case is the one given in~\cite{dzahini2022expected} under stochastic noise, where, however, the author only focuses on classic optimization problems. 

We first extend to our bounded error setting a standard result that allows to get an upper bound on the gradient norm for unsuccessful iterations (see, e.g.,~\cite[Theorem 3.3]{kolda2003optimization}). 

\begin{lemma} \label{l:unsucc}
	Let Assumptions~\ref{as:fLip} and~\ref{as:nablaFlip} hold, together with~\eqref{eq:cmDk}. Let $\{x_k\}$ be a sequence generated by Algorithm~\ref{alg:2}. If the iteration $k$ is unsuccessful, then
	\begin{equation}\label{eq:ngfkbound}
		\n{\nabla F(x_k)} \leq \frac{1}{\kappa} \left(\frac{(L + c)\alpha_k}{2} + \frac{2L_f\varepsilon}{\alpha_k} \right) \, .
	\end{equation}
\end{lemma}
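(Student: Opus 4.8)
The plan is to mimic the classical direct-search argument (as in \cite[Theorem 3.3]{kolda2003optimization}), but carefully tracking the two sources of error: the quadratic term $\frac{c}{2}\alpha_k^2$ from the sufficient decrease condition, and the $L_f\varepsilon$ accuracy of $\tilde{F}$ relative to $F$ established in~\eqref{eq:epsaccuracy}. Since iteration $k$ is unsuccessful, none of the poll points yields sufficient decrease, i.e., for every $d \in D_k$,
\begin{equation*}
	f(x_k + \alpha_k d, y^{\alpha_k d}_k) \geq f(x_k, y_k) - \frac{c}{2}\alpha_k^2 \, ,
\end{equation*}
which in the notation $\tilde{F}$ reads $\tilde{F}(x_k + \alpha_k d) \geq \tilde{F}(x_k) - \frac{c}{2}\alpha_k^2$ for all $d \in D_k$.

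Next I would pass from $\tilde{F}$ to $F$ using~\eqref{eq:epsaccuracy} twice (once at $x_k$, once at $x_k + \alpha_k d$), obtaining
\begin{equation*}
	F(x_k + \alpha_k d) \geq F(x_k) - \frac{c}{2}\alpha_k^2 - 2L_f\varepsilon \qquad \text{for all } d \in D_k \, .
\end{equation*}
Then I would invoke the descent lemma for $F$ (valid under Assumption~\ref{as:nablaFlip}): for each $d \in D_k$ with, say, $\n{d}=1$ (or absorbing $\n{d}$ into the constants if the directions are not unit-norm),
\begin{equation*}
	F(x_k + \alpha_k d) \leq F(x_k) + \alpha_k \nabla F(x_k)^\top d + \frac{L}{2}\alpha_k^2 \n{d}^2 \, .
\end{equation*}
Combining the last two displays and cancelling $F(x_k)$ gives $\alpha_k \nabla F(x_k)^\top d \geq -\frac{(L+c)}{2}\alpha_k^2 - 2L_f\varepsilon$, hence $-\nabla F(x_k)^\top d \leq \frac{(L+c)}{2}\alpha_k + \frac{2L_f\varepsilon}{\alpha_k}$ after dividing by $\alpha_k > 0$. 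Since this holds for every $d \in D_k$, it holds in particular for the direction attaining the cosine measure against $v = -\nabla F(x_k)$; by~\eqref{eq:cmDk} that direction satisfies $-\nabla F(x_k)^\top d / \n{d} \geq \kappa \n{\nabla F(x_k)}$, so (taking $\n{d}=1$) $\kappa\n{\nabla F(x_k)} \leq \frac{(L+c)}{2}\alpha_k + \frac{2L_f\varepsilon}{\alpha_k}$, which is exactly~\eqref{eq:ngfkbound} after dividing by $\kappa$.

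I expect the only real subtlety to be bookkeeping around the norm of the poll directions — whether $D_k$ is taken to consist of unit vectors or merely of a positive spanning set with bounded norms — since the descent lemma contributes an $\n{d}^2$ factor while the cosine measure in~\eqref{eq:cmDk} is already normalized by $\n{d}$. If the directions are uniformly bounded (as is standard, e.g. unit norm), these constants are harmless and can be folded into $L$; I would state the argument for $\n{d}=1$ and remark that the general bounded-norm case is identical up to constants. Everything else is a routine chain of inequalities, so there is no genuine obstacle; the value of the lemma is simply in isolating the extra $\frac{2L_f\varepsilon}{\alpha_k}$ term, which is what forces the stepsize lower bound $\Alpha$ in the subsequent complexity analysis.
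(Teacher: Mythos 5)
Your proposal is correct and follows essentially the same argument as the paper's proof: both combine the failure of the sufficient decrease condition, the $2L_f\varepsilon$ transfer from $\tilde{F}$ to $F$ via~\eqref{eq:epsaccuracy}, the descent lemma, and the cosine-measure direction, differing only in the order in which the inequalities are chained. Your remark about the $\n{d}=1$ normalization matches the paper, which also assumes unit-norm poll directions at the final step.
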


\begin{proof}
	Let $d \in D_k$ be such that
	\begin{equation} \label{eq:cmdd}
		-\nabla F(x_k)^\top d \geq \kappa \n{\nabla F(x_k)} \n{d} \, . 
	\end{equation}
	We have
	\begin{equation} \label{eq:lboundnabla}
		\begin{aligned}
			& \kappa \alpha_k \n{\nabla F(x_k)} \n{d}  - \alpha_k^2 \frac{L}{2}\n{d}^2 \leq - \alpha_k \nabla F(x_k)^\top d - \alpha_k^2 \frac{L}{2}\n{d}^2 \\ 
			& \leq F(x_k) - F(x_k + \alpha_k d) 	\leq \tilde{F}(x_k) - \tilde{F}(x_k + \alpha_k d) + 2L_f\varepsilon \leq \frac{c}{2}\alpha_k^2 + 2L_f\varepsilon \, ,
		\end{aligned}
	\end{equation}
	where we used~\eqref{eq:cmdd} in the first inequality, the standard descent lemma in the second inequality,~\eqref{eq:epsaccuracy} in the third inequality, and that the step is unsuccessful in the last inequality. Therefore, since by assumption $\n{d} = 1$
	\begin{equation}
		\kappa \alpha_k \n{\nabla F(x_k)} =	\kappa \alpha_k \n{\nabla F(x_k)} \n{d}  \leq \frac{c}{2}\alpha_k^2 + 2L_f\varepsilon \alpha_k^2 \frac{L}{2}\n{d}^2 = \frac{c}{2}\alpha_k^2 + 2L_f\varepsilon+ \alpha_k^2 \frac{L}{2} \, ,
	\end{equation}
	implying the thesis.
\end{proof}

We now prove convergence and complexity bounds when $\Alpha > 0$, extending those given in~\cite{vicente2013worst} for the exact oracle case, and $\Alpha=0$. We notice that in this second case we lose finite convergence and our  guarantees are thus somewhat weaker, i.e., we are only able to prove that the stepsize converges to $0$ and that at some point the gradient norm is $\mathcal{O}(\sqrt{\varepsilon})$. 
\begin{theorem} \label{th:smoothcompl}
Let Assumptions~\ref{as:flow},~\ref{as:fLip} and~\ref{as:nablaFlip} hold, together with~\eqref{eq:cmDk} for every $k \in \mathbb{N}_0$. Let $\{x_k\}$ be a sequence generated by Algorithm~\ref{alg:2}.
	\begin{enumerate}
		\item 	If $\Alpha > 0$, then the algorithm terminates after $\bar{k}$ iterations, with
		\begin{equation}
			\bar{k} < 1 + \frac{2}{\Alpha^2c}(\tilde F(x_0) - f_{low} + 2L_f\varepsilon)\left(1 - \frac{\ln \gamma}{\ln \theta}\right) + \frac{\ln \Alpha - \ln \alpha_0}{\ln \theta} \, ,
		\end{equation}
		and its last iterate $x_{\bar{k}}$ is such that
		\begin{equation}
			\n{\nabla F(x_{\bar{k}})} \leq \frac{1}{\kappa} \left( \frac{(L + c)\Alpha}{2} + \frac{2L_f\varepsilon}{\Alpha} \right) \, .
		\end{equation}
		\item If, furthermore, it holds that $\Alpha = 2\sqrt{\frac{L_f \varepsilon}{L + c}}$, then
		\begin{equation} \label{eq:glowbound}
			\n{\nabla F(x_{\bar{k}})} \leq \frac{2}{\kappa} \sqrt{(c + L) L_f \varepsilon} \, .
		\end{equation}
		\item If $\Alpha = 0$, then $\alpha_k \rightarrow 0$, and if additionally $\alpha_0 \geq \barAlpha= 2\sqrt{\frac{L_f \varepsilon}{L + c}}$, for some $\bar{k} \in \mathbb{N}_0$ we have
		\begin{equation} \label{eq:gbound2}
			\n{\nabla F(x_{\bar{k}})} \leq \frac{1}{\theta\kappa} \left( \frac{(L + c)\barAlpha}{2} + \frac{2L_f\varepsilon}{\barAlpha}\right) \, ,
		\end{equation}
		and 
		\begin{equation} \label{eq:Fbound}
			F(x_k) \leq F(x_{\bar{k}}) + 2L_f\varepsilon \quad \textnormal{ for all }k\geq \bar{k} \, .
		\end{equation}
	\end{enumerate}
\end{theorem}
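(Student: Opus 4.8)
The plan is to combine the standard "pigeonhole on successful iterations" accounting used in direct-search complexity proofs with the modified gradient bound from Lemma~\ref{l:unsucc}, carefully tracking the additive noise term $2L_f\varepsilon$ throughout via the accuracy estimate~\eqref{eq:epsaccuracy}. I will treat the three items in order, since item~1 does most of the work and items~2 and~3 are essentially specializations or limiting variants.

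For item~1, I would first argue that the algorithm must terminate in finite time when $\Alpha>0$. The key monotonicity fact is that $\tilde F(x_k)$ is nonincreasing and strictly decreases by at least $\tfrac{c}{2}\alpha_k^2\geq \tfrac{c}{2}\Alpha^2$ on every successful iteration, exactly as in Lemma~\ref{l:ec}; together with the lower bound $\tilde F(x)\geq f_{\text{low}}-L_f\varepsilon$ from~\eqref{eq:epsaccuracy} this caps the number of successful iterations by roughly $\tfrac{2(\tilde F(x_0)-f_{\text{low}}+L_f\varepsilon)}{c\Alpha^2}$. Then I would bound the number of unsuccessful iterations: each unsuccessful iteration multiplies $\alpha_k$ by $\theta<1$ (unless already at $\Alpha$), each successful one multiplies it by at most $\gamma$, so a telescoping/logarithmic argument on $\ln\alpha_k$ — the standard one from~\cite{vicente2013worst} — bounds the number of unsuccessful iterations before $\alpha_k$ is driven down to $\Alpha$ in terms of the number of successful iterations and the ratio $\ln\gamma/\ln\theta$. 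Adding the two counts and simplifying gives the stated bound on $\bar k$. Since the termination criterion is $\alpha_k=\alpha_{k+1}=\Alpha$, the last iteration is unsuccessful with stepsize $\Alpha$, so Lemma~\ref{l:unsucc} applied with $\alpha_k=\Alpha$ yields the gradient bound on $x_{\bar k}$.

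Item~2 is then pure calculus: substitute $\Alpha=2\sqrt{L_f\varepsilon/(L+c)}$ into $\frac{1}{\kappa}\big(\frac{(L+c)\Alpha}{2}+\frac{2L_f\varepsilon}{\Alpha}\big)$; both terms become $\sqrt{(L+c)L_f\varepsilon}/\kappa$, giving the clean $\frac{2}{\kappa}\sqrt{(c+L)L_f\varepsilon}$ — this is just optimizing the sum $a\Alpha+b/\Alpha$ at $\Alpha=\sqrt{b/a}$. For item~3, with $\Alpha=0$ the algorithm need not terminate, but the same accounting shows only finitely many successful iterations occur, hence after some index all iterations are unsuccessful; on unsuccessful iterations $\alpha_{k+1}=\theta\alpha_k$, so $\alpha_k\to 0$. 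Since $\alpha_0\geq\barAlpha$ and $\alpha_k\to 0$ decreasing (past the last success) through multiplication by $\theta$, there is a first unsuccessful iteration $\bar k$ with $\alpha_{\bar k}\geq\barAlpha$ but $\alpha_{\bar k+1}=\theta\alpha_{\bar k}<\barAlpha$, i.e. $\alpha_{\bar k}\in[\barAlpha,\barAlpha/\theta)$; feeding this into Lemma~\ref{l:unsucc} and using that $t\mapsto \frac{(L+c)t}{2}+\frac{2L_f\varepsilon}{t}$ is unimodal with minimum at $\barAlpha$ gives~\eqref{eq:gbound2} (the worst case in the interval is at the endpoint $\barAlpha/\theta$, producing the $1/(\theta\kappa)$ prefactor). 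Finally~\eqref{eq:Fbound} follows because for $k\geq\bar k$ all iterations are unsuccessful so $x_k=x_{\bar k}$, trivially giving $F(x_k)=F(x_{\bar k})\leq F(x_{\bar k})+2L_f\varepsilon$ — or, more robustly if one does not want to invoke "eventually constant," from $\tilde F(x_k)\leq\tilde F(x_{\bar k})$ combined with~\eqref{eq:epsaccuracy} on both sides.

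The main obstacle is item~1's bookkeeping: getting the constants in the iteration bound exactly right, in particular the coefficient $\big(1-\tfrac{\ln\gamma}{\ln\theta}\big)$ multiplying the successful-iteration count and the offset $\tfrac{\ln\Alpha-\ln\alpha_0}{\ln\theta}$. One has to be careful that the $\max\{\Alpha,\theta\alpha_k\}$ update can "clip" the stepsize so that the logarithmic telescoping is an inequality, not an equality, and that the stepsize is never forced below $\Alpha$; but this only helps (fewer unsuccessful iterations), so the bound stated is an upper bound. A secondary subtlety is making sure Lemma~\ref{l:unsucc} applies verbatim at the terminal iterate — which it does, since termination is declared precisely on an unsuccessful iteration with $\alpha_k=\Alpha$ — and that in item~3 the unimodality argument correctly identifies the endpoint $\barAlpha/\theta$ rather than $\barAlpha$ as the relevant worst case.
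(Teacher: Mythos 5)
Your proposal follows essentially the same route as the paper: pigeonhole on successful iterations via the $\tfrac{c}{2}\Alpha^2$ decrease and the lower bound $\tilde F \geq f_{\mathrm{low}} - L_f\varepsilon$, a logarithmic telescoping bound $\Alpha \leq \alpha_0\gamma^{k_s}\theta^{k_{ns}-1}$ for the unsuccessful count, Lemma~\ref{l:unsucc} at the terminal unsuccessful iteration for the gradient bound, direct substitution for item~2, and a threshold-crossing argument plus monotonicity of $\tilde F$ for item~3. Your identification of $\bar k$ in item~3 as the last index with $\alpha_{\bar k}\geq\barAlpha$ (giving $\alpha_{\bar k}\in[\barAlpha,\barAlpha/\theta)$, necessarily unsuccessful) is if anything slightly cleaner than the paper's choice of $\alpha_{\bar k}\in[\theta\barAlpha,\barAlpha]$, and both intervals yield the $1/(\theta\kappa)$ prefactor.

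One small inaccuracy: in item~3 you assert that with $\Alpha=0$ ``the same accounting shows only finitely many successful iterations occur, hence after some index all iterations are unsuccessful.'' This is not implied by the accounting once $\Alpha=0$, since the per-success decrease $\tfrac{c}{2}\alpha_k^2$ is no longer bounded away from zero; the correct statement (and the one the paper uses) is that only finitely many successful iterations occur with stepsize above any fixed threshold, which still forces $\alpha_k\to 0$. Consequently your first justification of~\eqref{eq:Fbound} via ``$x_k=x_{\bar k}$ for $k\geq\bar k$'' is not valid in general, but your fallback argument from $\tilde F(x_k)\leq\tilde F(x_{\bar k})$ together with~\eqref{eq:epsaccuracy} is exactly the paper's proof and closes the gap.
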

\begin{proof}
	1.	Let $k_s$ and $k_{ns}$ be the number of successful and unsuccessful steps, so that $k_s + k_{ns} = k$. Reasoning as in Lemma~\ref{l:ec}, we obtain by~\eqref{eq:numsucc}
	\begin{equation} \label{eq:ksub}
		k_s < \frac{2}{\Alpha^2c}(F(x_0) - f_{\textnormal{low}} + 2L_f\varepsilon) \, .
	\end{equation} 
	Furthermore, since
	\begin{equation}
		\Alpha \leq \alpha_k \leq \alpha_0\gamma^{k_{s}}\theta^{k_{ns} - 1} \, , 
	\end{equation}
	we get
	\begin{equation} \label{eq:knssub}
		\begin{aligned}
			& k_{ns} \leq 1 -\frac{1}{\ln(\theta)}(\ln(\alpha_0) - \ln(\Alpha) + k_{s}\ln(\gamma)) \\ 
			& \leq 1 -\frac{1}{\ln(\theta)}(\ln(\alpha_0) - \ln(\Alpha) + \frac{2}{\Alpha^2c}(\tilde F(x_0) - f_{\textnormal{low}} + 2L_f\varepsilon)\ln(\gamma))	\, ,	
		\end{aligned}
	\end{equation}
	where we applied~\eqref{eq:ksub} in the second inequality. 
	Combining the bounds on the successful and unsuccessful steps~\eqref{eq:ksub} and~\eqref{eq:knssub}, we have
	\begin{equation}
		k = k_{s} + k_{ns}	< 1 + \frac{2}{\Alpha^2c}(\tilde F(x_0) - f_{low} + 2L_f\varepsilon)\left(1 - \frac{\ln \gamma}{\ln \theta}\right) + \frac{\ln \Alpha - \ln \alpha_0}{\ln \theta} \, ,
	\end{equation}
	as desired. \\
	2. Follows from a direct application of the first result. \\
	3. Reasoning as in the first result, the number of successful steps with stepsize above a certain threshold is bounded, hence $\alpha_k \rightarrow 0$. Furthermore, for any $\bar{k} \in \mathbb{N}_0$, if $k \geq \bar{k}$
	\begin{equation}
		F(x_k) \leq \tilde{F}(x_{k}) + L_f\varepsilon \leq \tilde{F}(x_{\bar{k}}) + L_f\varepsilon \leq F(x_{\bar{k}}) + 2L_f\varepsilon \, ,
	\end{equation} 
	which proves~\eqref{eq:Fbound}. Let $\barAlpha = 2\sqrt{\frac{L_f \varepsilon}{L + c}}$. 
	Since $\alpha_0 \geq \barAlpha$, and $\alpha_k \rightarrow 0$ with contraction factor $\theta$, we must have 
	$\alpha_{\bar{k}} \in [\theta \barAlpha, \barAlpha]$ for some $\bar{k} \in \mathbb{N}_0$. 
	Then~\eqref{eq:gbound2} follows from~\eqref{eq:ngfkbound} for $\alpha_k = \alpha_{\bar{k}}$.
\end{proof}

We now extend to our setting the $\mathcal{O}(n^2/\epsilon^2)$ complexity result given in~\cite[Corollary 2]{vicente2013worst}. For a fixed precision $\epsilon$, an approximation error $\varepsilon = \mathcal{O}(\epsilon^2)$ is required, as for classic gradient approximation schemes~\cite{berahas2022theoretical}.
\begin{corollary}
	Let Assumptions~\ref{as:flow},~\ref{as:fLip} and~\ref{as:nablaFlip} hold, together with~\eqref{eq:cmDk} for every $k \in \mathbb{N}_0$. Let $\{x_k\}$ be a sequence generated by Algorithm~\ref{alg:2}. Assume also $\varepsilon \leq \epsilon^2 \kappa^2$, that at every iterations there are at most $d_1n$ function evaluations and that $\kappa \geq d_2/\sqrt{n}$, for $d_1, d_2 > 0$. Then if $\Alpha~=~2\sqrt{\frac{L_f \varepsilon}{L + c}}$, the algorithm terminates after $\mathcal{O}(n^2/\epsilon^2)$ function evaluations with $\n{\nabla f(x_{\bar{k}})} \leq d_3 \epsilon$, for $d_3 > 0$ depending only on $c, L$ and $L_f$.
\end{corollary}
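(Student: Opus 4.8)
The plan is to derive everything from Theorem~\ref{th:smoothcompl}, parts~1 and~2, by plugging in the quantitative hypotheses and tracking how each quantity scales with $n$ and $\epsilon$. First I would set $\Alpha = 2\sqrt{L_f\varepsilon/(L+c)}$ so that part~2 applies, giving the terminal bound
\begin{equation}
	\n{\nabla F(x_{\bar k})} \leq \frac{2}{\kappa}\sqrt{(c+L)L_f\varepsilon}\,.
\end{equation}
Now use $\varepsilon \leq \epsilon^2\kappa^2$, which yields $\sqrt{\varepsilon} \leq \epsilon\kappa$, hence $\n{\nabla F(x_{\bar k})} \leq 2\sqrt{(c+L)L_f}\,\epsilon =: d_3\epsilon$ with $d_3$ depending only on $c,L,L_f$. (I would note the cosmetic $f$ vs.\ $F$ mismatch in the statement and read it as $\nabla F$.) This disposes of the accuracy claim.

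Next I would bound the function-evaluation count. By part~1 of Theorem~\ref{th:smoothcompl} the algorithm stops after
\begin{equation}
	\bar k < 1 + \frac{2}{\Alpha^2 c}\bigl(\tilde F(x_0) - f_{low} + 2L_f\varepsilon\bigr)\left(1 - \frac{\ln\gamma}{\ln\theta}\right) + \frac{\ln\Alpha - \ln\alpha_0}{\ln\theta}
\end{equation}
iterations. The dominant term is the one with $\Alpha^{-2}$: since $\Alpha^2 = 4L_f\varepsilon/(L+c)$, we get $1/\Alpha^2 = \Theta(1/\varepsilon)$, and because $\varepsilon \leq \epsilon^2\kappa^2$ and $\kappa \geq d_2/\sqrt n$, actually I should be careful — the hypothesis gives an upper bound on $\varepsilon$, not a lower one, so to get a clean $\mathcal{O}$ statement one takes $\varepsilon = \Theta(\epsilon^2\kappa^2)$ (the regime of interest, matching the corollary's framing "an approximation error $\varepsilon=\mathcal O(\epsilon^2)$ is required"); then $1/\Alpha^2 = \Theta(1/(\epsilon^2\kappa^2)) = \mathcal O(n/(d_2^2\epsilon^2))$ using $\kappa^2 \geq d_2^2/n$. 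The logarithmic terms $\ln\Alpha$, $\ln\alpha_0$ are lower-order (logarithmic in $\varepsilon$, hence in $n$ and $1/\epsilon$) and absorbed into the $\mathcal O$. So $\bar k = \mathcal O(n/\epsilon^2)$ iterations. Multiplying by at most $d_1 n$ function evaluations per iteration (Step~1 search plus the $|D_k|$ poll points, all bounded by $d_1 n$) gives $\mathcal O(n^2/\epsilon^2)$ total function evaluations.

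The main obstacle — really the only subtlety — is the logical direction of the inequality $\varepsilon \leq \epsilon^2\kappa^2$: as literally stated it makes the accuracy bound work but does not by itself force the iteration count to be $\mathcal O(n/\epsilon^2)$ rather than something smaller, and conversely a very small $\varepsilon$ would only help accuracy while inflating $\bar k$. The honest reading is that the corollary describes the balanced regime $\varepsilon \asymp \epsilon^2\kappa^2$, and I would state the complexity bound as an upper bound that holds whenever $\varepsilon$ is chosen of that order; with that reading the proof is a direct substitution into Theorem~\ref{th:smoothcompl}. A secondary minor point is confirming that all the ``constants'' ($\tilde F(x_0)-f_{low}$, $\gamma$, $\theta$, $\alpha_0$, $L_f$, $c$, $L$, $d_1$, $d_2$) are genuinely independent of $n$ and $\epsilon$, so that they may legitimately be hidden in the $\mathcal O(\cdot)$ and in $d_3$; this is immediate from the hypotheses. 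I would close by remarking that the $n^2$ dependence matches~\cite[Corollary 2]{vicente2013worst} and the $\varepsilon = \mathcal O(\epsilon^2)$ requirement matches gradient-approximation schemes~\cite{berahas2022theoretical}, exactly as the surrounding text anticipates.
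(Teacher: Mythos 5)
Your proof is correct and follows essentially the same route as the paper, whose entire argument is the one-liner ``Follows from point 1 and 2 of Theorem~\ref{th:smoothcompl}, plugging in the parameters specified in the assumptions''; you simply carry out that substitution explicitly. Your observation about the direction of the hypothesis $\varepsilon \leq \epsilon^2\kappa^2$ (it secures the accuracy bound $\n{\nabla F(x_{\bar k})} \leq d_3\epsilon$ but a matching lower bound $\varepsilon = \Theta(\epsilon^2\kappa^2)$ is needed to keep $\bar k = \mathcal{O}(n/\epsilon^2)$, since $\bar k \sim 1/\Alpha^2 \sim 1/\varepsilon$) is a genuine subtlety the paper glosses over, and your reading of the balanced regime is the right one.
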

\begin{proof}
	Follows from point 1 and 2 of Theorem~\ref{th:smoothcompl}, plugging in the parameters specified in the assumptions.
\end{proof}
\section{Simple decrease condition} \label{s:simpdec}
In this section, we analyze two methods based on simple decrease condition (i.e., with $\rho(t)~=~0$, in~\eqref{dec_cond}), one for potentially nonsmooth objectives and one for smooth objectives. Both methods follow the scheme presented in Algorithm~\ref{alg:3}, which is an adaptation to the BO setting of the mesh adaptive direct-search algorithm (MADS, see~\cite{audet2014survey} and references therein). Again we lower bound the stepsize by a constant $\Alpha$. The stepsize updating rule we use to handle unsuccessful iterations depends on the mesh size parameter $\Delta_k$ and the contraction coefficient $\theta$, and smoothness of the true objective (i.e., update varies between the smooth and the nonsmooth case).

It is a standard assumption in the analysis of MADS that all the iterates lie in a compact set (see, e.g.,~\cite[Section 3]{audet2006mesh}). In our framework, this can be ensured  if the following boundedness assumption is satisfied. 
\begin{assumption} \label{as:comp}
	The set 
	\begin{equation}
		\mathcal{L}_{\varepsilon} = \{x \in \R^{n_x} \ | \ F(x) \leq F(x_0) + 2L_f\varepsilon \}
	\end{equation}
is bounded.
\end{assumption}

The mesh, as defined in the literature (see,e.g.,~\cite{audet2017derivative,conn2009introduction} and references therein for further details), is a discrete set of points from which the  algorithm selects candidate trial points. Its coarseness is parameterised by the mesh size parameter $\delta$. The goal of each
algorithm iteration is to get a mesh point whose objective function value improves with respect to the incumbent value.
Given a positive spanning set $D$ and a center $x$ the related mesh is formally defined as follows:
\begin{equation} \label{eq:mesh}
	M = \{ x + \delta Dy \ | \ y \in \mathbb{N}^p\} \, ,
\end{equation}
where, with a slight abuse of notation, we use $D$ also for the matrix $D\in\R^{n \times p}$ with columns corresponding to the elements of the set $D$. We notice that the mesh is just a conceptual tool, and is never actually constructed. 

\begin{algorithm}
	\caption{Inexact mesh based DS for bilevel optimization}
	\label{alg:3}
	\begin{algorithmic}[1]
		\STATE \textbf{Initialization:} Choose starting point $x_0\in\mathbb{R}^{\nx}$, stepsize lower bound $\Alpha \geq 0$, initial mesh size parameter $\alpha_0 = \Alpha \theta^{-\mu_0}$, with $\mu_0 \in \mathbb{N}_0$, starting frame parameter $\Delta_0 = \alpha_0$, stepsize contraction/expansion parameter $\theta \in (0, 1) \cap \mathbb{Q}$, $G \in \R^{n \times n}$ invertible and $Z \in \mathbb{Z}^{n \times p}$ with columns forming a positive spanning set. Let $D = GZ$. Let $y_0 = \tilde{y}(x_0)$ \\ be an approximate minimizer for the lower-level problem in $x_0$.    
		
		\FOR{$k=0,1,2,\ldots$}
		\STATE \textbf{[Optional]} Let $M_k$ be the mesh with size parameter $\alpha_k$, positive spanning set $D$ \\ and center $x_k$. Select a finite subset $S_k$ of $M_k$. 
		\IF{$f(t, \tilde{y}(t)) < f(x_k, y_k)$ for some $t \in S_k$}
		\STATE Set $x_{k + 1} = t$, declare the iteration successful, and \textbf{go to step 7}.
		\ENDIF
		\STATE Choose a positive spanning set $D_k$ such that $\{x_k + \alpha_kd \ | \ d \in D_k\} \subset M_k$. Compute \\ the approximate minimizer $y^{\alpha_k d}_{k} = \tilde{y}(x_k + \alpha_k d_k)$ for the lower problem. Evaluate $f$ \\ at the poll points belonging to $\{ (x_k+\alpha_k d, y^{\alpha_k d}_{k}): \, d \in D_k \}$.
		\IF{there exists $d_k \in D_k$ such that $f(x_k+\alpha_k d_k, y^{\alpha_k d_k}_{k}) < f(x_k, y_k)$}
		\STATE Declare the iteration as successful. Set $x_{k+1}=x_k+\alpha_k d_k$ and $y_{k+1}=y^{\alpha_k d_k}_{k}$.
		\ELSE
		\STATE Declare the iteration as unsuccessful. Set $x_{k+1}=x_k$ and $y_{k+1}=y_k$.
		\ENDIF
		\STATE \textbf{If} the iteration was successful \textbf{then} set $\Delta_{k + 1} = \theta^{-1} \Delta_k$ and  $\alpha_k = \min(\Delta_k, \Delta_k^2)$. \\ \textbf{Else}  set $\Delta_{k+1}= \max \{ \Alpha, \theta \Delta_k \}$ and $\alpha_{k + 1} = \alpha_u(\alpha_k, \Delta_k, \theta)$.
		\STATE \textbf{[Optional]} If some approximate stationarity condition is satisfied, terminate the algorithm.
		\ENDFOR
	\end{algorithmic}
\end{algorithm}

\subsection{Nonsmooth objectives}
With respect to the general scheme presented in Algorithm~\ref{alg:3}, here the stepsize updating rule for unsuccessful iterations is given by $\alpha_u(\alpha_k, \Delta_k, \theta) = \min(\Delta_k, \Delta_k^2, \theta\alpha_k)$, ensuring that $\alpha_k \rightarrow 0$ and the mesh gets infinitely dense if the algorithm gets stuck in a certain point. The set of search directions $D_k$ must be such that
\begin{equation} \label{eq:dbracket2}
	\frac{\Delta_k}{\alpha_k} b_1(\alpha_k) \leq \n{d} \leq \frac{\Delta_k}{\alpha_k} b_2(\alpha_k)
\end{equation}
for all $d \in D_k$, with $b_i: \R_{> 0} \rightarrow \R_{> 0}$ such that $\lim_{t \rightarrow 0}b_i(t) = 1$ for $i\in\{1, 2\}$. Thus with respect to the classic MADS scheme here the frame size $\Delta_k$ defines also a lower bound and not only an upper bound on the distance between the current iterate and tentative points selected in the poll step. This adjustment is necessary due to the error on the true objective evaluation. As shown in the next lemma, Condition~\eqref{eq:dbracket2} ensures that as the stepsize converges to $0$ the tentative steps get closer and closer the boundary of a ball of radius $\Alpha$.
\begin{lemma} \label{l:spherelimit}
	Assume that $\Alpha > 0$ and that~\eqref{eq:dbracket2} holds. Then if $\lim_{k \in K} \alpha_k = 0$, the set of limit points of $\{\alpha_kD_k\}_{k \in K}$ is contained in $S^{n_x - 1}(\Alpha)$.
\end{lemma}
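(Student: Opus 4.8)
The statement is about what happens to the set $\alpha_k D_k$ (i.e., the scaled poll directions) along a subsequence $K$ where $\alpha_k \to 0$: each limit point should have norm exactly $\Alpha$. First I would recall the stepsize/frame-size bookkeeping. On unsuccessful iterations, $\Delta_{k+1} = \max\{\Alpha, \theta\Delta_k\}$ and $\alpha_{k+1} = \alpha_u(\alpha_k,\Delta_k,\theta) = \min(\Delta_k, \Delta_k^2, \theta\alpha_k)$; since $\alpha_0 = \Alpha\theta^{-\mu_0}$ and $\Delta_0 = \alpha_0$, as long as $\alpha_k$ keeps shrinking the frame parameter $\Delta_k$ bottoms out at $\Alpha$ after finitely many contractions. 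So along the subsequence $K$ with $\alpha_k \to 0$, we eventually have $\Delta_k = \Alpha$ (a constant), while $\alpha_k \to 0$. Hence $\Delta_k/\alpha_k \to +\infty$, which is the regime the bracketing condition~\eqref{eq:dbracket2} is designed for.

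Next I would combine~\eqref{eq:dbracket2} with this observation. For $d \in D_k$ we have
\[
  \frac{\Delta_k}{\alpha_k} b_1(\alpha_k) \le \n{d} \le \frac{\Delta_k}{\alpha_k} b_2(\alpha_k),
\]
so multiplying through by $\alpha_k$ gives $\Delta_k b_1(\alpha_k) \le \n{\alpha_k d} \le \Delta_k b_2(\alpha_k)$. For $k \in K$ large, $\Delta_k = \Alpha$, and since $\alpha_k \to 0$ we have $b_i(\alpha_k) \to 1$ for $i \in \{1,2\}$ by hypothesis on the $b_i$. Therefore $\n{\alpha_k d} \to \Alpha$ uniformly over $d \in D_k$. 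Consequently any convergent sub-subsequence of points $\alpha_k d_k$ (with $d_k \in D_k$) has a limit of norm $\Alpha$, i.e., lies on the sphere $S^{n_x-1}(\Alpha)$. That is precisely the claim that the set of limit points of $\{\alpha_k D_k\}_{k\in K}$ is contained in $S^{n_x-1}(\Alpha)$.

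**Main obstacle.** The only genuinely non-routine point is justifying that $\Delta_k$ stabilizes at $\Alpha$ along $K$ and is not, say, still decreasing — this requires being a little careful about the interplay between successful and unsuccessful iterations. The cleanest route: note that if $\alpha_k \to 0$ along $K$, then for $k$ large enough in $K$ the iterations cannot be "too successful," and in particular the frame contraction $\Delta_{k+1} = \max\{\Alpha,\theta\Delta_k\}$ forces $\Delta_k \to \Alpha$; since $\Alpha > 0$ and the $\max$ with $\Alpha$ clamps from below, after finitely many steps $\Delta_k = \Alpha$ exactly (using $\theta \in \mathbb{Q}\cap(0,1)$ and that $\Delta_0$ is a rational power of $\theta$ times $\Alpha$, every $\Delta_k$ in the contraction chain is of the form $\Alpha\theta^{j}$, so it hits $\Alpha$ on the nose rather than merely approaching it). Once that is pinned down, the rest is the two-line sandwiching argument above.

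**Proof.**
\begin{proof}
	Since $\alpha_0 = \Alpha\theta^{-\mu_0}$, $\Delta_0 = \alpha_0$, and each unsuccessful step replaces $\Delta_k$ by $\max\{\Alpha,\theta\Delta_k\}$ while each successful step replaces it by $\theta^{-1}\Delta_k$, every value taken by $\Delta_k$ is of the form $\Alpha\theta^{j}$ for some integer $j \ge 0$; moreover the update is non-increasing on unsuccessful steps and clamped below by $\Alpha$. Now suppose $\lim_{k\in K}\alpha_k = 0$. Because on successful steps $\alpha_k$ does not decrease and on unsuccessful steps $\alpha_{k+1} = \min(\Delta_k,\Delta_k^2,\theta\alpha_k)$, the condition $\alpha_k \to 0$ forces infinitely many unsuccessful steps with decreasing $\alpha_k$, and in particular $\Delta_k$ cannot be bounded away from $\Alpha$; since the values of $\Delta_k$ form the discrete set $\{\Alpha\theta^j : j \ge 0\}$ and the contraction clamps at $\Alpha$, there is $k_0$ such that $\Delta_k = \Alpha$ for all $k \in K$ with $k \ge k_0$.

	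Fix such a $k \in K$ and any $d \in D_k$. By~\eqref{eq:dbracket2},
	\[
		\Alpha\, b_1(\alpha_k) = \Delta_k b_1(\alpha_k) \le \n{\alpha_k d} \le \Delta_k b_2(\alpha_k) = \Alpha\, b_2(\alpha_k).
	\]
	Letting $k \to \infty$ along $K$ and using $\alpha_k \to 0$ together with $\lim_{t\to 0} b_i(t) = 1$ for $i \in \{1,2\}$, we get $\n{\alpha_k d} \to \Alpha$, uniformly in $d \in D_k$. Hence if $v$ is any limit point of $\{\alpha_k D_k\}_{k\in K}$, say $v = \lim_{j} \alpha_{k_j} d_{k_j}$ with $d_{k_j} \in D_{k_j}$ and $k_j \in K$, then $\n{v} = \lim_j \n{\alpha_{k_j} d_{k_j}} = \Alpha$, i.e., $v \in S^{n_x-1}(\Alpha)$. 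This proves the claim.
\end{proof}
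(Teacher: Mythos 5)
Your proof is correct and follows essentially the same route as the paper's: first argue that $\Delta_k$ stabilizes at $\Alpha$ once $\alpha_k \to 0$ along $K$, then use~\eqref{eq:dbracket2} to sandwich $\n{\alpha_k d}$ between $\Alpha b_1(\alpha_k)$ and $\Alpha b_2(\alpha_k)$ and pass to the limit. Your treatment of why $\Delta_k$ hits $\Alpha$ exactly (via the discrete set of values $\Alpha\theta^j$ and the clamping in the update) is more explicit than the paper's, which simply asserts this step.
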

\begin{proof}
	If $\lim_{k \in K} \alpha_k = 0$ then it holds that, for $k \in K$ large enough, $\Delta_k = \Alpha$. Consider $\{d_k\}= D_k$. It holds that, for all $d_k$,
	\begin{equation}
		\limsup_{k \in K} \n{\alpha_k d_k} \leq \limsup_{k \in K} \Delta_k b_2(\alpha_k) = \Alpha \, ,
	\end{equation}
	where we applied~\eqref{eq:dbracket2} in the inequality.
	Analogously, we can prove $\liminf_{k \in K} \n{\alpha_k d_k} \geq \Delta_k$, whence $\lim_{k \in K} \n{\alpha_k d_k} = \Alpha$, which implies the thesis. 
\end{proof}

We now extend to this scheme the $(\delta, \epsilon)$-Goldstein stationarity result proved under the sufficient decrease condition in Section~\ref{s:nsosd}. Also in this case we are not aware of any analogous result for the standard MADS scheme, which is instead known to convergence to Clarke stationary points~\cite{audet2006mesh}. \\
We start with a lemma that extends  a well known property of   MADS (see, e.g.,~\cite[Proposition~3.1]{audet2006mesh}) to our bilevel setting.
\begin{lemma}
	Let Assumptions~\ref{as:fLip},~\ref{as:Flip} and~\ref{as:comp} hold. Then the sequence $\{\alpha_k\}$ generated by Algorithm~\ref{alg:MADS} is such that $\liminf \alpha_k = 0$. 
\end{lemma}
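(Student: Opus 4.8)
The plan is to adapt the classical MADS argument for $\liminf \alpha_k = 0$ (as in~\cite[Proposition~3.1]{audet2006mesh}) to the inexact bilevel setting, where the relevant monotone quantity is $\tilde{F}(x_k) = f(x_k, \tilde{y}(x_k))$ rather than the true objective. First I would argue by contradiction: suppose $\liminf \alpha_k > 0$, so that there is some $\alpha_{\min}' > 0$ with $\alpha_k \geq \alpha_{\min}'$ for all $k$; combined with the stepsize updating rule for the simple decrease scheme (where both $\Delta_k$ and $\alpha_k$ only decrease by the factor $\theta$ on unsuccessful iterations, down to the floor $\Alpha$, and increase on successful ones), this bounds $\Delta_k$ away from $0$ as well, so $\Delta_k$ takes finitely many values and in fact only finitely many unsuccessful iterations can occur without an intervening successful one — hence there are infinitely many successful iterations.

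Next I would use the monotonicity of $\{\tilde{F}(x_k)\}$: on every iteration (successful or not) $\tilde{F}(x_{k+1}) \leq \tilde{F}(x_k)$, since acceptance requires $f(t,\tilde{y}(t)) < f(x_k,y_k)$ (and on a successful poll step likewise), so the sequence is non-increasing. On a successful iteration the mesh structure forces the improvement to be nontrivial: the new iterate lies on the mesh $M_k$ with mesh size parameter $\alpha_k \geq \alpha_{\min}' $ bounded below (and $\Delta_k$ bounded below), so the trial points belong to a mesh of bounded-below granularity inside the compact set $\mathcal{L}_\varepsilon$ (using Assumption~\ref{as:comp}, which keeps all iterates in $\mathcal{L}_\varepsilon$ because $F(x_k) \leq \tilde{F}(x_k) + L_f\varepsilon \leq \tilde{F}(x_0) + L_f\varepsilon \leq F(x_0) + 2L_f\varepsilon$). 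A mesh with mesh parameter bounded below, intersected with a bounded set, is a finite set; since $\{\tilde{F}(x_k)\}$ is non-increasing and strictly decreases at each of the infinitely many successful iterations, the iterates $x_k$ at successful steps would have to be pairwise distinct points of this finite set — a contradiction. Therefore $\liminf \alpha_k = 0$.

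The main obstacle I anticipate is making the ``finiteness of the reachable mesh'' step fully rigorous: in MADS the mesh $M_k$ changes its center with $k$, so one must observe that under $\liminf \alpha_k > 0$ all the $M_k$ for successful iterations are sub-lattices of a single common lattice of bounded-below granularity (this uses $\theta \in (0,1)\cap\mathbb{Q}$, $D = GZ$ with $Z$ integer and $G$ fixed, and the fact that each new center is obtained from the previous one by an integer combination of the columns of $\alpha_k D$ with $\alpha_k/\Alpha$ an integer power of $\theta^{-1}$), so that all successful iterates lie on a fixed lattice; intersecting with the bounded set $\mathcal{L}_\varepsilon$ then genuinely yields a finite set. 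One also has to handle the mild subtlety that $\alpha_k = \min(\Delta_k,\Delta_k^2)$ can differ from $\Delta_k$, but since $\Delta_k$ is bounded below this only affects constants. The remaining steps — monotonicity of $\tilde{F}$, the transfer $F(x_k) \le \tilde F(x_k) + L_f\varepsilon$ from~\eqref{eq:epsaccuracy}, and the counting argument — are routine. (Note: the statement refers to ``Algorithm~\ref{alg:MADS}''; this should read Algorithm~\ref{alg:3}.)
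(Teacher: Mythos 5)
Your proposal is correct and takes essentially the same route as the paper: the paper likewise observes that $\{\tilde F(x_k)\}$ is non-increasing and strictly decreasing on successful iterations, that the iterates therefore remain in the compact set $\mathcal{L}_\varepsilon$, and then appeals to the finiteness of mesh points reachable in a compact set when $\alpha_k$ is bounded below, deferring the lattice details to the classical MADS argument in~\cite[Proposition~3.1]{audet2006mesh}. You simply make explicit the common-lattice/counting steps that the paper delegates to that reference (and you are right that ``Algorithm~\ref{alg:MADS}'' should read Algorithm~\ref{alg:3}).
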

\begin{proof}
	Since $\{\tilde{F}(x_k)\}$ is  non-increasing (and strictly decreasing for successful iterations), $\{x_k\}$ is contained in the set $\mathcal{L}_{\varepsilon}$, which is compact by Assumptions~\ref{as:Flip} and~\ref{as:comp}. Thus $\liminf \alpha_k = 0$ follows from the finiteness of feasible points generated in $\mathcal{L}_{\varepsilon}$ when keeping the parameter $\alpha_k$ lower bounded, which can be proved with the same arguments used for MADS in~\cite[Proposition 3.1]{audet2006mesh}.
\end{proof}

We can now state our main result.

\begin{theorem}\label{th:nssimd}
	Let Assumptions~\ref{as:fLip},~\ref{as:Flip} and~\ref{as:comp} hold. Let $K$ be a subset of  unsuccessful iteration indices  related to Algorithm~\ref{alg:MADS}. Let us further assume that:
	\begin{itemize}
        \item $\lim_{k \in K} x_k = \bar{x}$;
		\item $\lim_{k \in K} \alpha_k = 0$;
		\item $\{\hat{D}_k\}_{k \in K}$ is dense in the unit sphere, with $\hat{D}_k = \{ \frac{d}{\n{d}} \ | \ d\in D_k\}$;
		\item Condition~\eqref{eq:dbracket2} holds.
	\end{itemize}
Then, the limit point $\bar{x}$ of $\{x_k\}_{k \in K}$ is $(\delta, \epsilon)$-Goldstein stationary, for
	\begin{equation}
		\epsilon = \frac{4L_f\varepsilon}{\Alpha}~~\mbox{and}  \quad  \delta= \Alpha.
	\end{equation}
\end{theorem}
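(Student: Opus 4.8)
The plan is to mimic the argument of Theorem~\ref{th:nonsmooth}, replacing the sufficient-decrease inequality by the simple-decrease inequality and the density of $\{g_k\}$ by the density of $\{\hat D_k\}_{k\in K}$. First I would note that since each $k\in K$ is an unsuccessful iteration, every poll point was rejected, so for every $d\in D_k$ we have $f(x_k+\alpha_k d,\tilde y(x_k+\alpha_k d))\ge f(x_k,y_k)$, i.e.\ $\tilde F(x_k+\alpha_k d)\ge \tilde F(x_k)$. Using~\eqref{eq:epsaccuracy} twice this gives $F(x_k+\alpha_k d)\ge F(x_k)-2L_f\varepsilon$ for all $d\in D_k$. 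By Lemma~\ref{l:spherelimit} the vectors $\alpha_k d$ (for $d\in D_k$, $k\in K$ large) converge to the sphere $S^{n_x-1}(\Alpha)$, and by the assumed density of $\{\hat D_k\}_{k\in K}$ together with $\lim_{k\in K}x_k=\bar x$ and continuity of $F$ (Assumption~\ref{as:Flip}), I would pass to the limit to obtain
\begin{equation}\label{eq:Fnssimd}
	F(\bar x)\le F(\bar x+d)+2L_f\varepsilon\qquad\text{for every }d\text{ with }\n{d}=\Alpha.
\end{equation}
The only subtlety here is the exchange of limits: one fixes a target direction $v$ on the unit sphere, extracts from $K$ a subsequence along which $\hat D_k$ has an element converging to $v$ (possible by density) and $\alpha_k\n{d_k}\to\Alpha$ (Lemma~\ref{l:spherelimit}), and uses uniform continuity of $F$ on a neighborhood of the compact segment to kill the error terms $|F(x_k+\alpha_kd_k)-F(\bar x+\Alpha v)|$.

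Given~\eqref{eq:Fnssimd}, the rest is exactly the variational argument from Theorem~\ref{th:nonsmooth}. Define $\bar F_{\bar x}(d):=F(\bar x+d)+\frac{2L_f\varepsilon}{\Alpha^2}\n{d}^2$. Then $\bar F_{\bar x}(0)\le \bar F_{\bar x}(d)$ for all $d$ on the sphere of radius $\Alpha$ by~\eqref{eq:Fnssimd}, so a minimizer $\tilde d$ of $\bar F_{\bar x}$ over the closed ball $\{\n{d}\le\Alpha\}$ satisfies $\n{\tilde d}<\Alpha$ (the boundary values dominate the value at $0$; one has to handle the case of equality, but then $0$ itself is an interior minimizer and we take $\tilde d=0$). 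Interiority gives $0\in\partial \bar F_{\bar x}(\tilde d)=\partial F(\bar x+\tilde d)-\frac{4L_f\varepsilon}{\Alpha^2}\tilde d$, hence $g:=\frac{4L_f\varepsilon}{\Alpha^2}\tilde d\in\partial F(\bar x+\tilde d)$. Since $\n{\tilde d}<\Alpha$ we have $\bar x+\tilde d\in B_{\Alpha}(\bar x)$, so $g\in\partial_{\Alpha}F(\bar x)$, and $\n{g}<\frac{4L_f\varepsilon}{\Alpha^2}\cdot\Alpha=\frac{4L_f\varepsilon}{\Alpha}$. This is precisely the claimed $(\delta,\epsilon)$-Goldstein stationarity with $\delta=\Alpha$ and $\epsilon=\frac{4L_f\varepsilon}{\Alpha}$.

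I expect the main obstacle to be the limiting argument leading to~\eqref{eq:Fnssimd}: one must carefully combine three separate limits (the iterate $x_k\to\bar x$, the normalized directions approaching an arbitrary prescribed $v$ by density, and the scaled direction norms $\alpha_k\n{d_k}\to\Alpha$ from Lemma~\ref{l:spherelimit}) while only assuming continuity — not Lipschitzness — is strictly needed, though Assumption~\ref{as:Flip} is available. A minor point worth stating explicitly is that $b_1,b_2\to1$ and $\Delta_k=\Alpha$ eventually (along $K$) force $\n{d_k}\to 1$, so that $\alpha_k d_k$ and $\Alpha\hat d_k$ have the same limit set; this is what makes the bracketing condition~\eqref{eq:dbracket2} do its job. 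Everything after~\eqref{eq:Fnssimd} is a verbatim repetition of the compactness/subdifferential step already used in Theorem~\ref{th:nonsmooth}, so no new difficulty arises there. \ \rule{0.5em}{0.5em}
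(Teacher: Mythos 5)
Your proposal is correct and follows essentially the same route as the paper: derive $F(x_k)-F(x_k+\alpha_k d_k)\leq 2L_f\varepsilon$ from the unsuccessful-poll condition and~\eqref{eq:epsaccuracy}, pass to the limit via Lemma~\ref{l:spherelimit} and the density of $\{\hat D_k\}_{k\in K}$, and then reuse the perturbed-function variational argument of Theorem~\ref{th:nonsmooth}. If anything, you are slightly more careful than the paper, which simply says "the thesis follows as in Theorem~\ref{th:nonsmooth}": your explicit handling of the non-strict inequality (taking $\tilde d=0$ when the boundary minimum ties with $\bar F_{\bar x}(0)$) addresses the one point where the earlier strict-inequality argument does not carry over verbatim.
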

\begin{proof}
	Let $\bar{d} \in \R^n$ with $\n{\bar{d}} = 1$, and let $L \subset K$ be such that $\lim_{k \in L} \frac{d_k}{\n{d_k}} \rightarrow \bar{d}$, with $d_k \in D_k$. Then $\alpha_k d_k \rightarrow \Alpha \bar{d}$ by Lemma~\ref{l:spherelimit}. Now, for every $k \in L$
	\begin{equation} \label{eq:madsdeltaf}
		F(x_k) - F(x_k + \alpha_k d_k) \leq \tilde{F}(x_k) - \tilde{F}(x_k + \alpha_k d_k) + 2L_f\varepsilon \leq 2L_f\varepsilon\, , 
	\end{equation}
	where the first inequality follows from~\eqref{eq:epsaccuracy}, and we used that the step $k$ is unsuccessful in the second inequality. Passing to the limit, we obtain 
	\begin{equation} \label{eq:madsdineq}
		F(\bar{x}) \leq F(\bar{x} + \Alpha\bar{d}) + 2L_f\varepsilon \, .
	\end{equation} 
	Now let $\bar{F}_{\bar{x}}(d) = F(\bar{x} + d) + \frac{2L_f\varepsilon}{\Alpha^2} \n{d}^2$.
	By applying~\eqref{eq:madsdeltaf} we get 
	$$\bar{F}_{\bar{x}}(0) \leq \bar{F}_{\bar{x}}(\Alpha\bar{d})\, , $$
	and given that $\bar{d}$ is arbitrary, this holds for any $d$ such that $\n{d} = \Alpha$. The thesis then follows as in the proof of Theorem~\ref{th:nonsmooth}.
\end{proof}

As in Section~\ref{s:nsosd}, here we also have a corollary showing that for $\Alpha \propto \sqrt{\varepsilon}$ we are able to get a $(\mathcal{O}(\sqrt{\varepsilon}), \mathcal{O}(\sqrt{\varepsilon}))$-Goldstein stationary point. 
\begin{corollary}
	Under the assumptions of Theorem~\ref{th:nssimd},
the limit point $\bar{x}$ of  the sequence $\{x_k\}$ generated by Algorithm~\ref{alg:suff_dec_ns} with $\Alpha = 2\sqrt{L_f\varepsilon}$  is
 $(\delta,\epsilon)$-Goldstein stationary, for
	\begin{equation}
		\epsilon = \delta = 2\sqrt{L_f\varepsilon} \, .
	\end{equation}
\end{corollary}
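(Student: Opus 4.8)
The corollary is an immediate specialization of Theorem~\ref{th:nssimd}, so the plan is simply to substitute the chosen value $\Alpha = 2\sqrt{L_f\varepsilon}$ into the two formulas provided by that theorem, namely $\epsilon = \frac{4L_f\varepsilon}{\Alpha}$ and $\delta = \Alpha$. First I would verify that all hypotheses of Theorem~\ref{th:nssimd} are still in force: we are told to assume them, so the only thing to check is that fixing the particular constant $\Alpha = 2\sqrt{L_f\varepsilon}$ does not conflict with any of them — and it does not, since those hypotheses (density of $\{\hat D_k\}$, Condition~\eqref{eq:dbracket2}, existence of the limiting subsequence with $\alpha_k \to 0$ and $x_k \to \bar x$) place no upper or lower restriction on the value of the positive constant $\Alpha$ beyond $\Alpha > 0$, which holds here as long as $\varepsilon > 0$.

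Then the computation is a one-liner: with $\Alpha = 2\sqrt{L_f\varepsilon}$ we get
\begin{equation}
	\delta = \Alpha = 2\sqrt{L_f\varepsilon}, \qquad
	\epsilon = \frac{4L_f\varepsilon}{\Alpha} = \frac{4L_f\varepsilon}{2\sqrt{L_f\varepsilon}} = 2\sqrt{L_f\varepsilon},
\end{equation}
so indeed $\epsilon = \delta = 2\sqrt{L_f\varepsilon}$, which is exactly the claimed conclusion. Theorem~\ref{th:nssimd} then guarantees that the limit point $\bar x$ is $(\delta,\epsilon)$-Goldstein stationary with these values.

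I do not anticipate any real obstacle here; the statement is a corollary in the most literal sense, obtained by plugging a convenient numerical choice of the free parameter into a theorem already proved. The only minor point of care is purely editorial: the statement as written refers to "Algorithm~\ref{alg:suff_dec_ns}" while Theorem~\ref{th:nssimd} is phrased in terms of "Algorithm~\ref{alg:MADS}" (and the section is built on Algorithm~\ref{alg:3}); these labels should be reconciled so that the corollary and the theorem it invokes refer to the same scheme. Beyond fixing that cross-reference, nothing further is needed — the proof is the substitution above together with a pointer to Theorem~\ref{th:nssimd}.
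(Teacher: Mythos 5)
Your proposal is correct and matches the paper's (implicit) argument exactly: the paper treats this corollary as an immediate substitution of $\Alpha = 2\sqrt{L_f\varepsilon}$ into the formulas $\epsilon = 4L_f\varepsilon/\Alpha$ and $\delta = \Alpha$ from Theorem~\ref{th:nssimd}, and your one-line computation is the whole content. Your remark about the inconsistent algorithm cross-references is also well taken, since the labels cited in the corollary and theorem statements do not match the algorithm actually analyzed in that section.
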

\subsection{Smooth objectives}
Now we consider the case where the true objective is smooth, i.e., Assumption~\ref{as:nablaFlip} holds. With respect to the general scheme reported in Algorithm~\ref{alg:3}, we have $\alpha_u(\alpha_k, \Delta_k, \theta) = \min(\Delta_k, \Delta_k^2)$, and the algorithm terminates if $\alpha_k = \alpha_{k + 1} = \Alpha$. As for $D_k$, it must always satisfy $\cm(D_k) \geq \kappa$ for some positive $\kappa$ independent from $k$, as well as
\begin{equation} \label{eq:dbrackets}
	\frac{\Delta_k}{\alpha_k}b_1 \leq \n{d} \leq \frac{\Delta_k}{\alpha_k}b_2 
\end{equation}
for every $d \in D_k$. \\
We remark that convergence of mesh based schemes for smooth objectives is well understood (see, e.g.,~\cite[Chapter 7]{audet2017derivative}), so that once again our main contribution here is the adaptation to the bilevel setting. We begin our analysis by extending Lemma~\ref{l:unsucc} under the simple decrease condition and condition~\eqref{eq:dbrackets} on the descent directions.
\begin{lemma} \label{l:unsucc2}
	Let Assumptions~\ref{as:fLip} and~\ref{as:nablaFlip} hold, together with~\eqref{eq:cmDk}. Let $\{x_k\}$ be a sequence generated by Algorithm~\ref{alg:MADS}.
  If the step $k$ is unsuccessful, then
	\begin{equation}\label{eq:ngfkbound2}
		 \n{\nabla F(x_{k})}  \leq \frac{1}{\kappa} \left( \frac{b_2 \Delta_k L}{2} + \frac{2L_f\varepsilon}{b_1 \Delta_k} \right) \, .
	\end{equation}
\end{lemma}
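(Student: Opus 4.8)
The plan is to mimic the proof of Lemma~\ref{l:unsucc}, but carefully track the fact that under the simple decrease condition there is no $\frac{c}{2}\alpha_k^2$ slack term, and that the poll directions $d \in D_k$ now have norm of order $\frac{\Delta_k}{\alpha_k}$ rather than one, so the relevant ``step length'' in $x$-space is $\alpha_k \n{d} \approx \Delta_k$ rather than $\alpha_k$.

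First I would pick $d \in D_k$ achieving the cosine measure bound, i.e. $-\nabla F(x_k)^\top d \geq \kappa \n{\nabla F(x_k)} \n{d}$, which exists by~\eqref{eq:cmDk}. Then I would run the same chain of inequalities as in~\eqref{eq:lboundnabla}: multiply by $\alpha_k$, apply the descent lemma to bound $F(x_k) - F(x_k + \alpha_k d)$ from below by $-\alpha_k \nabla F(x_k)^\top d - \frac{L}{2}\alpha_k^2 \n{d}^2$, then pass to $\tilde F$ losing $2L_f\varepsilon$ via~\eqref{eq:epsaccuracy}, and finally use that the iteration is unsuccessful so that $\tilde F(x_k) - \tilde F(x_k + \alpha_k d) \leq 0$ (no $\rho$ term now). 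This gives
\[
\kappa \alpha_k \n{\nabla F(x_k)} \n{d} \leq \frac{L}{2}\alpha_k^2 \n{d}^2 + 2L_f\varepsilon \, .
\]
Dividing by $\kappa \alpha_k \n{d}$ yields $\n{\nabla F(x_k)} \leq \frac{1}{\kappa}\left(\frac{L \alpha_k \n{d}}{2} + \frac{2 L_f \varepsilon}{\alpha_k \n{d}}\right)$. Now I would substitute the two-sided bound~\eqref{eq:dbrackets}: in the first term use $\alpha_k \n{d} \leq \Delta_k b_2$ and in the second term use $\alpha_k \n{d} \geq \Delta_k b_1$ (so $\frac{1}{\alpha_k\n{d}} \leq \frac{1}{\Delta_k b_1}$), which produces exactly the claimed bound~\eqref{eq:ngfkbound2}.

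The only real subtlety — and the one step worth stating carefully rather than as ``routine'' — is the direction of the inequalities when plugging in~\eqref{eq:dbrackets}, since the bracketed expression is not monotone in $\n{d}$: one must bound the growing $O(\alpha_k\n{d})$ part using the upper bound $b_2 \Delta_k$ and the decaying $O(1/(\alpha_k\n{d}))$ part using the lower bound $b_1 \Delta_k$. Here it is important that~\eqref{eq:dbrackets} uses constants $b_1, b_2$ (not the functions $b_i(\alpha_k)$ as in~\eqref{eq:dbracket2}), so no limiting argument is needed. A minor bookkeeping point is that the statement refers to Algorithm~\ref{alg:MADS} while the smooth simple-decrease scheme is the variant of Algorithm~\ref{alg:3}; the argument only uses the poll step structure and the unsuccessful-iteration condition, so it applies verbatim. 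I expect no genuine obstacle beyond getting these inequality directions right.
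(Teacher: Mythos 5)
Your proposal is correct and follows essentially the same route as the paper's proof: the paper likewise reuses the chain of inequalities from~\eqref{eq:lboundnabla} with $c=0$, divides by $\kappa\alpha_k\n{d}$, and then bounds the increasing term via $\alpha_k\n{d}\leq b_2\Delta_k$ and the decreasing term via $\alpha_k\n{d}\geq b_1\Delta_k$. Your explicit attention to the direction of the two substitutions from~\eqref{eq:dbrackets} is exactly the step the paper performs (silently) in its final displayed inequality.
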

\begin{proof}
		Since the step is unsuccessful, by
	considering $d \in D_{k}$ such that
	\begin{equation}
		-\nabla F(x_k)^{\top} d \geq \kappa \n{\nabla F(x_k)} \n{d}
	\end{equation}	
	we have, reasoning as in~\eqref{eq:lboundnabla} with $c = 0$
	\begin{equation}
		\kappa \alpha_k \n{\nabla F(x_k)} \n{d}  - \alpha_k^2 \frac{L}{2}\n{d}^2 \leq 2L_f\varepsilon \, .
	\end{equation}
	Finally, we get
	\begin{equation}
		 \n{\nabla F(x_{k})} \leq \frac{1}{\kappa} \left( \frac{\alpha_k L\n{d_k}}{2} + \frac{2L_f\varepsilon}{\alpha_k \n{d_k} } \right)  \leq \frac{1}{\kappa} \left( \frac{b_2 \Delta_k L}{2} + \frac{2L_f\varepsilon}{b_1 \Delta_k} \right) \, .
	\end{equation}
\end{proof}

We now extend Theorem~\ref{th:smoothcompl} to our mesh based scheme. The main difference is the absence of complexity estimates, which to our knowledge are not available for MADS schemes.
\begin{theorem}
	Let Assumptions~\ref{as:fLip},~\ref{as:Flip} and~\ref{as:comp} hold. Let $\{x_k\}$ be a sequence generated by Algorithm~\ref{alg:MADS}.

		\begin{enumerate}
		\item 		If $\Alpha > 0$, then the algorithm terminates in a finite number of iterations, with the last iterate $x_{\bar{k}}$ satisfying,
		\begin{equation} \label{eq:nablaboundmads}
			\n{\nabla F(x_{\bar{k}})} \leq \frac{1}{\kappa} \left( \frac{b_2\Alpha L}{2} + \frac{2L_f\varepsilon}{\Alpha b_1} \right) \, .
		\end{equation}
		\item If, furthermore, it holds that $\Alpha = 2\sqrt{\frac{L_f \varepsilon}{b_1b_2L}}$, then
		\begin{equation} \label{eq:glowbound2}
			\n{\nabla F(x_{\bar{k}})} \leq \frac{1}{\kappa} \sqrt{L b_2 L_f \varepsilon/ b_1} \, .
		\end{equation}
		\item If $\Alpha = 0$, then $\liminf \alpha_k = 0$, and if additionally $\alpha_0 \geq \barAlpha=  2\sqrt{\frac{L_f \varepsilon}{b_1b_2L}}$, for some $\bar{k} \in \mathbb{N}_0$ we have
		\begin{equation} 
			\n{\nabla F(x_{\bar{k}})} \leq \frac{1}{\theta \kappa} \left( \frac{L\Alpha b_2}{2} + \frac{2L_f\varepsilon}{b_1\Alpha } \right) \, ,
		\end{equation}
		and 
		\begin{equation} 
			F(x_k) \leq F(x_{\bar{k}}) + 2L_f\varepsilon \quad \textnormal{ for all }k\geq \bar{k} \, .
		\end{equation}
	\end{enumerate}
\end{theorem}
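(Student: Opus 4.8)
The plan is to mirror the structure of Theorem~\ref{th:smoothcompl}, replacing the stepsize $\alpha_k$ by the frame parameter $\Delta_k$ and invoking Lemma~\ref{l:unsucc2} in place of Lemma~\ref{l:unsucc}. For part 1, I would first argue that the algorithm cannot run forever when $\Alpha>0$. Since $\{\tilde F(x_k)\}$ is non-increasing and strictly decreases by a positive amount at each successful iteration (the mesh structure forces a quantifiable decrease, or one simply uses $\tilde F(x_k)\ge f_{\textnormal{low}}-L_f\varepsilon$ to bound the number of successful iterations as in~\eqref{eq:numsucc}), there are only finitely many successful iterations. Between them the frame parameter contracts by $\theta$, but it is bounded below by $\Alpha>0$; hence after finitely many unsuccessful iterations we reach $\Delta_k=\Delta_{k+1}=\Alpha$, i.e. $\alpha_k=\alpha_{k+1}=\min(\Alpha,\Alpha^2)$, which triggers the termination criterion. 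At the terminating (unsuccessful, $\Delta_{\bar k}=\Alpha$) iteration, Lemma~\ref{l:unsucc2} applied with $\Delta_{\bar k}=\Alpha$ gives exactly~\eqref{eq:nablaboundmads}.

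For part 2, I would simply substitute $\Alpha = 2\sqrt{L_f\varepsilon/(b_1 b_2 L)}$ into~\eqref{eq:nablaboundmads}. The two terms inside the parentheses become $\frac{b_2 L}{2}\cdot 2\sqrt{L_f\varepsilon/(b_1 b_2 L)}$ and $\frac{2L_f\varepsilon}{b_1}\cdot\frac{1}{2}\sqrt{b_1 b_2 L/(L_f\varepsilon)}$, each of which simplifies to $\sqrt{L b_2 L_f\varepsilon/b_1}$ (this is the usual balancing-of-two-terms computation, with the choice of $\Alpha$ minimizing the right-hand side up to the constant $b_1b_2$ appearing because the two $\Delta_k$-dependences carry factors $b_2$ and $1/b_1$). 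Summing the two equal terms and keeping the $1/\kappa$ prefactor yields~\eqref{eq:glowbound2}.

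For part 3, with $\Alpha=0$ the termination criterion never fires through the lower bound, but the counting argument still shows that the number of successful iterations with $\Delta_k$ above any fixed threshold is finite; combined with the contraction rule $\Delta_{k+1}=\theta\Delta_k$ on unsuccessful iterations this forces $\liminf_k \alpha_k = 0$ (note $\alpha_k = \min(\Delta_k,\Delta_k^2)\to 0$ along the subsequence where $\Delta_k\to 0$). Since $\alpha_0\ge\barAlpha$ and the frame shrinks by the factor $\theta$ at a time, there is a first index $\bar k$ with $\Delta_{\bar k}\in[\theta\barAlpha,\barAlpha]$, and this iteration must be unsuccessful (it is where the contraction happens); applying Lemma~\ref{l:unsucc2} at this $\bar k$ and using $\Delta_{\bar k}\ge\theta\barAlpha$ in the first term and $\Delta_{\bar k}\le\barAlpha$ in the second gives the stated bound with the extra $1/\theta$. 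The inequality $F(x_k)\le F(x_{\bar k})+2L_f\varepsilon$ for $k\ge\bar k$ follows exactly as in the proof of Theorem~\ref{th:smoothcompl}, chaining $F(x_k)\le\tilde F(x_k)+L_f\varepsilon\le\tilde F(x_{\bar k})+L_f\varepsilon\le F(x_{\bar k})+2L_f\varepsilon$ via~\eqref{eq:epsaccuracy} and monotonicity of $\{\tilde F(x_k)\}$.

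The main obstacle is the finite-termination argument in part 1 (and the $\liminf\alpha_k=0$ claim in part 3): unlike the sufficient-decrease setting of Theorem~\ref{th:smoothcompl}, here iterates live on a mesh, so one must borrow the MADS-style argument — already invoked in the lemma preceding Theorem~\ref{th:nssimd} via~\cite[Proposition 3.1]{audet2006mesh} — that only finitely many distinct mesh points can be visited while $\Delta_k$ stays bounded below, using Assumption~\ref{as:comp} to keep all iterates in the compact set $\mathcal{L}_\varepsilon$. Once that structural fact is in hand, everything else is the same balancing computation as in the sufficient-decrease case with $c=0$ and the substitutions $\alpha_k\rightsquigarrow\Delta_k$, $L+c\rightsquigarrow L$, and the $b_1,b_2$ factors tracked through Lemma~\ref{l:unsucc2}.
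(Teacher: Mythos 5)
Your proposal follows essentially the same route as the paper: finite termination via the finiteness of mesh points in the compact level set $\mathcal{L}_{\varepsilon}$ (Assumption~\ref{as:comp}), then Lemma~\ref{l:unsucc2} at the final unsuccessful iteration with $\Delta_{\bar{k}}=\Alpha$ for part 1, direct substitution for part 2, and the MADS-style $\liminf\alpha_k=0$ argument together with the $\tilde{F}$-monotonicity chain for part 3; your closing paragraph correctly identifies that the simple-decrease setting rules out the sufficient-decrease counting of successful steps hinted at earlier in your part 1, so the mesh-finiteness argument is indeed the one that carries the proof. One small observation: your own computation in part 2 gives two equal terms summing to $\tfrac{2}{\kappa}\sqrt{Lb_2L_f\varepsilon/b_1}$, which exposes a missing factor of $2$ in the paper's~\eqref{eq:glowbound2} (compare the analogous bound~\eqref{eq:glowbound}).
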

\begin{proof}
  1. 	Since the frame parameter $\Delta_k$ is lower bounded, the mesh parameter $\alpha_k$ is lower bounded as well, and, by the subsequent finiteness of $\bigcup_{k \in \mathbb{N}_0} M_k$, the algorithm terminates in a finite number of iterations. By the termination criterion, at the last iteration $\bar{k}$ we have $\Delta_{\bar{k}} = \Alpha$. 
	Since the last iteration is unsuccessful, we hence get
	\begin{equation}
			\n{\nabla F(x_{\bar{k}})} \leq \frac{1}{\kappa} \left( \frac{b_2 \Delta_k L}{2} + \frac{2L_f\varepsilon}{b_1 \Delta_k} \right) \\
			= \frac{1}{\kappa} \left( \frac{b_2 \Alpha L}{2} + \frac{2L_f\varepsilon}{b_1 \Alpha} \right)	\, ,
	\end{equation}
	where we applied Lemma~\ref{l:unsucc2} in the second inequality. \\
	2. Follows from the previous point replacing $\Alpha$ with the given value in~\eqref{eq:nablaboundmads}. \\
	3. The property $\liminf \alpha_k = 0$ follows from standard arguments used in the analysis of MADS schemes, already mentioned in the proof of Lemma~\ref{l:spherelimit}. The result then follows from point 1 and 2 (similarly to point 3 in Theorem~\ref{th:smoothcompl}).
\end{proof}

\section{Numerical illustration}
\label{sec:numerical_illustration}

In this section, we evaluate the performance of the proposed algorithms on a large collection of nonlinear bilevel optimization problems.

Three direct-search solvers derived from Algorithm~\ref{alg:2} and Algorithm~\ref{alg:3} were implemented in Matlab: \textbf{Mesh-DS} (related to Algorithm~\ref{alg:3}) with the mesh defined as in~\cite[Algorithm 8.2]{audet2017derivative}, \textbf{Coordinate-DS} (related to Algorithm~\ref{alg:2}) with $D_k=[\mathcal{B}_{\oplus}, -\mathcal{B}_{\oplus}]$ (where $\mathcal{B}_{\oplus}$ is the canonical basis of $\real^n$), and \textbf{Random-DS} (related to Algorithm~\ref{alg:2}) with $D_k=[\frac{v}{\|v\|}, -\frac{v}{\|v\|}]$, where $v \in \real^n$ is a uniformly generated vector.

In our tests, the parameters used for Algorithm~\ref{alg:2} and Algorithm~\ref{alg:3} were set as follows: $\Alpha=~10^{-6}$, $\theta=\frac{1}{2}$, $\alpha_0=1$, $c=10^{-3}$, and $\gamma=2$. For all the tested approaches, the optional search step (Step 1) was not included. Instead, in the poll step, when we observed a decrease along a specific direction, we
further explored it by using a simple extrapolation strategy (i.e., we multiplied the step-size $\alpha_k$ by $\gamma$ and re-evaluated the function).

In our implementation, the  lower-level problem is solved using the \textbf{fmincon} Matlab procedure. To quantify the impact of inexact lower-level solutions on the performances, we used  2 different accuracies when solving the lower-level problem (i.e., LL\_{tol} $\in\{10^{-3}, 10^{-6}\}$). The rest of the \textbf{fmincon} default parameters were kept unchanged. A feasibility tolerance of $10^{-6}$ for constraints violation was used in the solution of the lower-level problem.

The three solvers, \textbf{Mesh-DS}, \textbf{Coordinate-DS}, and \textbf{Random-DS}, were evaluated using $33$ small-scale bilevel optimization problems from the BOLIB Matlab library~\cite{BOLIB_2020}. This library consists of a collection of academic and real-world problems. The dimensions of the tested instances, with respect to the upper-level problem, do not exceed 10 variables. Since an initial point is not provided, we generated five problem instances by randomly selecting five different initial points, thus getting a total of $175$ problem instances.

The computational analysis is carried out by using well-known tools from the literature, that is  data and performance profiles~(see,e.g.,~\cite{JJMore_SMWild_2008} for further details). 
We briefly recall here their definitions.	Given a set~$S$ of algorithms and a set~$P$ of problems, for $s\in S$ and $p \in P$, let $t_{p,s}$ be the number of function evaluations required by algorithm $s$ on problem~$p$ to satisfy the condition 

     \begin{equation}\label{eq:stop}
     	\tilde F(x_k) \leq \tilde F_{\mbox{low}} + \alpha(\tilde F(x_0) - \tilde F_{\mbox{low}})\, , 
     \end{equation}
     
     where $\gamma_p \in (0, 1)$ and $\tilde F_{\mbox{low}}$ is the best objective function value achieved by any solver on problem~$p$. Then, the performance and data profiles of solver $s$ are defined by
     \begin{eqnarray*}
     	\rho_s(\gamma) & = & \frac{1}{|P|}\left|\left\{p\in P: \frac{t_{p,s}}{\min\{t_{p,s'}:s'\in S\}}\leq\gamma\right\}\right|,\\
     	d_s(\kappa) & = & \frac{1}{|P|}\left|\left\{p\in P: t_{p,s}\leq\kappa(n_p+1)\right\}\right|\, ,
     \end{eqnarray*}
     where $n_p$ is the dimension of problem $p$.
     We used a budget of $500$ upper level function evaluations in our experiments.


\begin{figure}[h]
	\centering
		\includegraphics[scale=0.46]{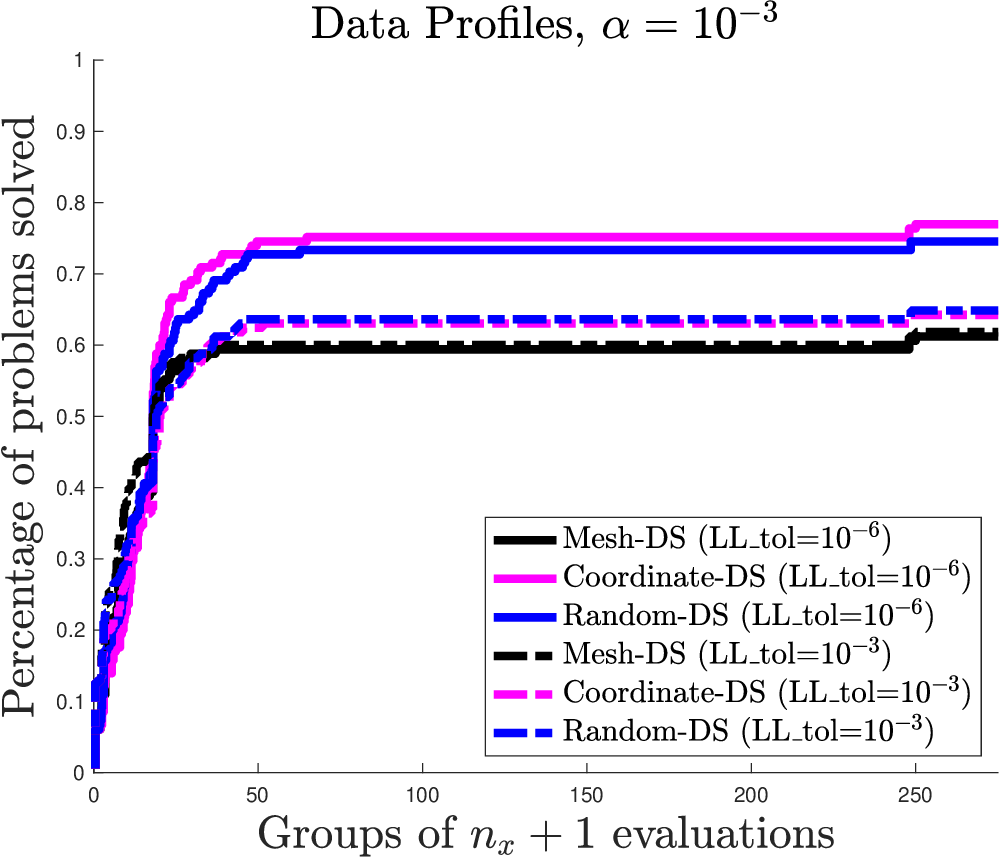}
  	\includegraphics[scale=0.46]{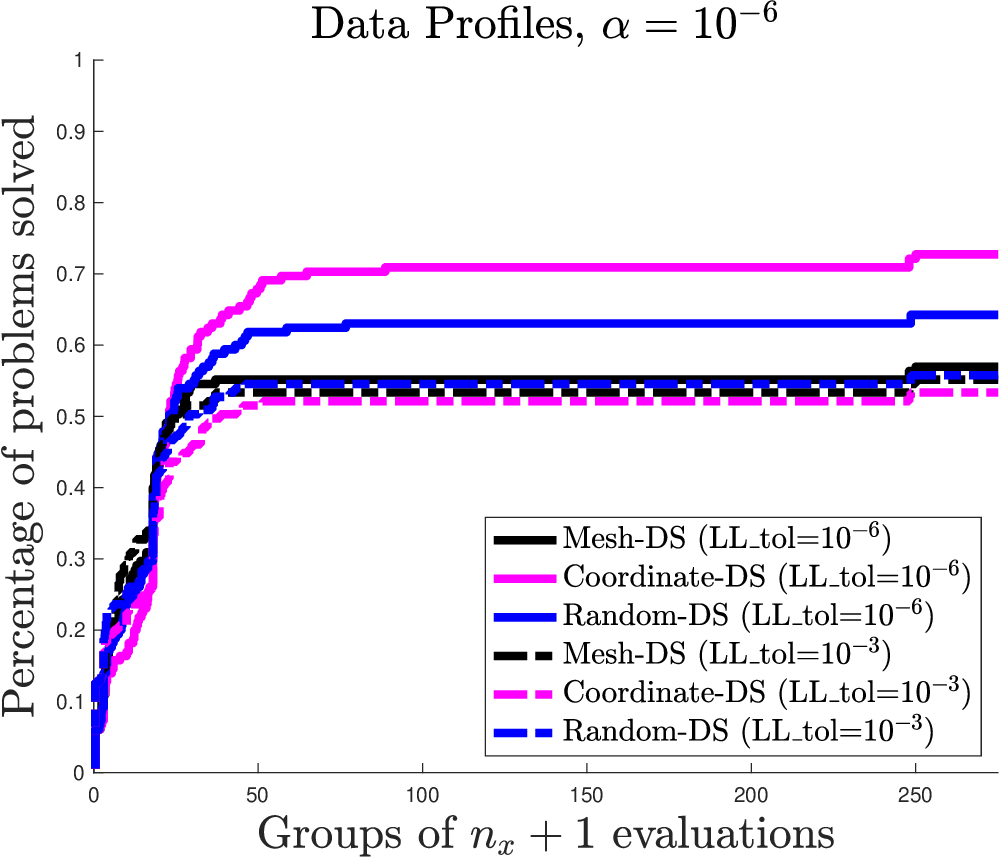}
		\caption{Data profiles using two  type of tolerances to get an approximate minimizer for the lower-level problem.}
		\label{fig:1}
\end{figure}

\begin{figure}[h]
	\centering
		\includegraphics[scale=0.46]{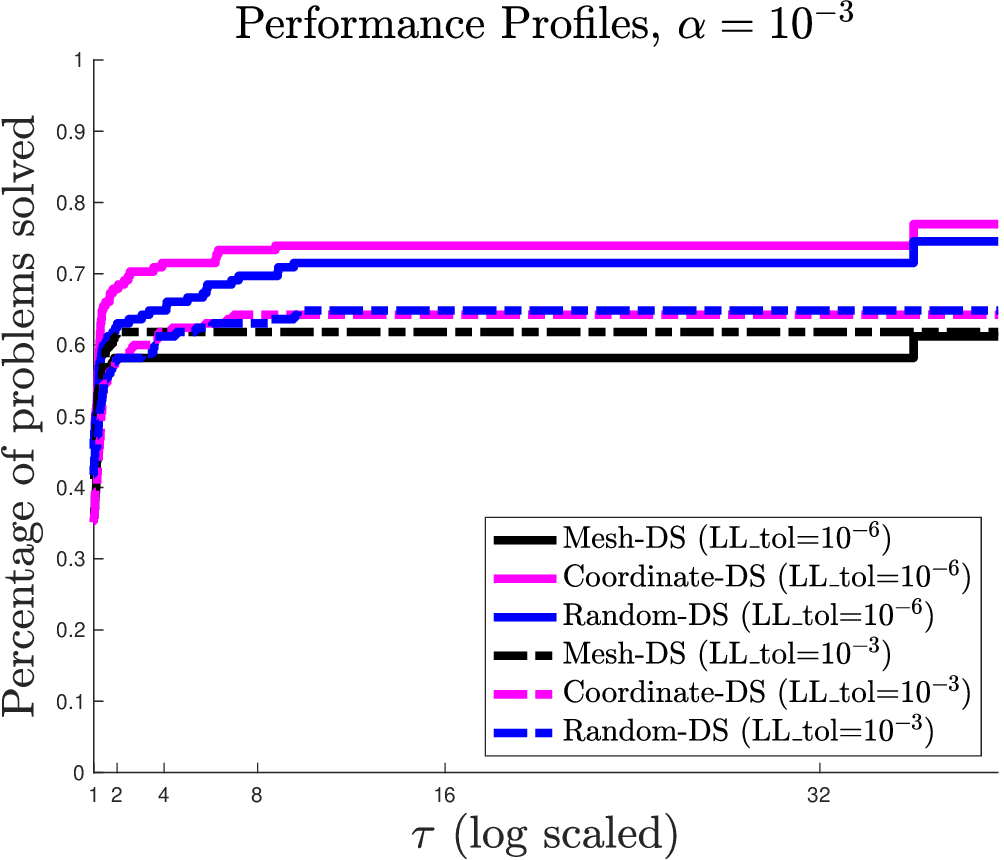}
  	\includegraphics[scale=0.46]{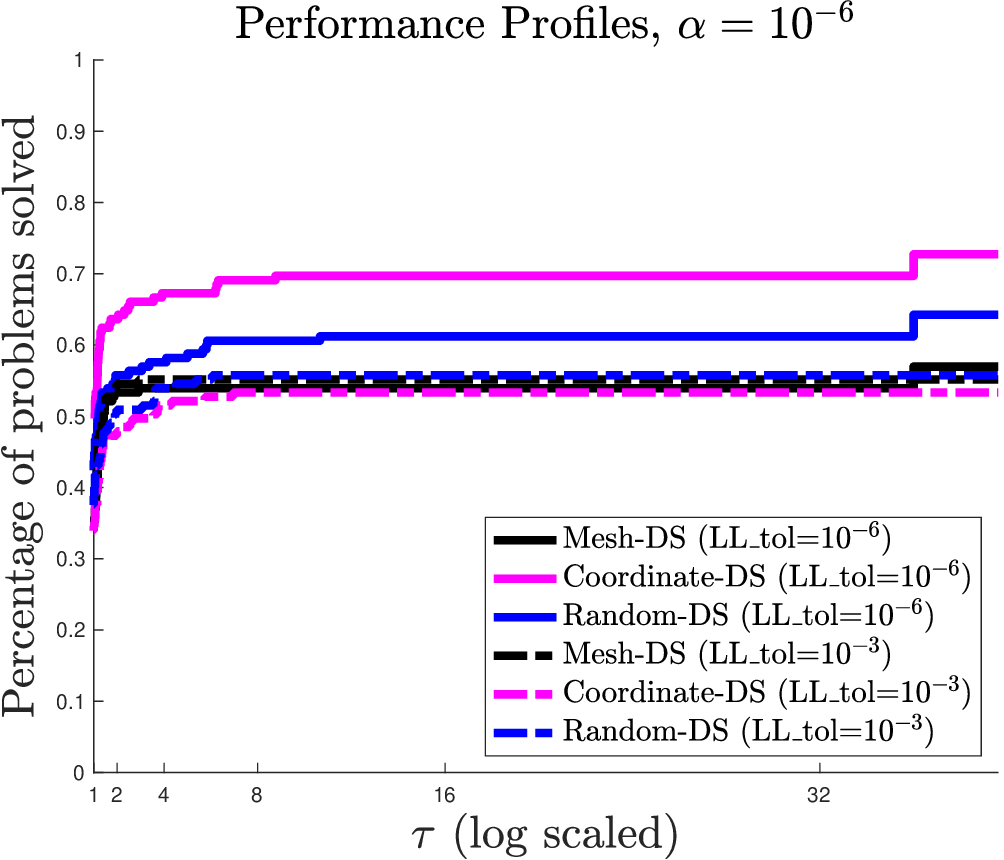}
		\caption{Performance profiles using two  type of tolerances to get an approximate minimizer for the lower-level problem.}
		\label{fig:2}
\end{figure}

Figures~\ref{fig:1}-\ref{fig:2} depict the resulting performance and data profiles, respectively, considering two levels of accuracy $\alpha$: $10^{-3}$ and $10^{-6}$. From Figure~\ref{fig:2}, it can be observed that the \textbf{Coordinate-DS} approach performs the best in terms of both efficiency (i.e., $\tau=1$) and robustness (i.e., larger $\tau$), particularly when the lower problem is solved accurately (i.e., LL\_tol=$10^{-6}$). The data profiles (see Figure~\ref{fig:2}) indicate that all the direct-search approaches perform similarly for small budgets. However, as the budget increases, the accuracy of the lower problem becomes impactful on the solver's performance. Overall, on the tested problems, the directional direct-search approaches seem to outperform the mesh-based direct-search approach.

\section{Conclusion}
In this work, we proposed an inexact direct-search based algorithmic framework  for bilevel optimization, under the assumption that the lower-level problem can be solved within a fixed accuracy. We then proved convergence of two different classes of methods fitting our scheme, that is directional direct-search methods with sufficient decrease and mesh based schemes with simple decrease. Our results include complexity estimates for a directional direct-search scheme tailored for BO with smooth true objective, which extends previously known complexity estimates for the single level case. We also 
considered  the nonsmooth case and gave convergence guarantees to  $(\delta, \epsilon)$-Goldstein stationary points for both classes, thus nicely extending the known Clarke stationary point convergence properties of analogous schemes in the single level case. A lower bound on the stepsize allows these method to convergence to a point with the desired stationarity properties in a finite number of iterations. Preliminary numerical results suggest that directional  direct-search methods might lead to better performance than mesh based strategies in this context. \\ 
Future developments include the extensions of our algorithms to constrained and stochastic objectives, as well as numerical comparisons with recent zeroth order smoothing based approaches for BO. \\

\textbf{Data availability.} The data analysed during the current study are available in the BOLIB library and the code will be made available by the authors upon reasonable request.

\clearpage
\bibliographystyle{plain}
\bibliography{refs}
\end{document}